\theoremstyle{plain}
\newtheorem{theorem}{Theorem}[section]
\newtheorem{corollary}[theorem]{Corollary}
\newtheorem{proposition}[theorem]{Proposition}
\newtheorem{lemma}[theorem]{Lemma}
\newtheorem{remark}[theorem]{Remark}
\numberwithin{theorem}{section}
\numberwithin{equation}{section}
\def\R{\mathbb{R}}
\def\div{\text{div}}
\renewcommand{\a }{\alpha }
\renewcommand{\b }{\beta }
\renewcommand{\d}{\delta }
\newcommand{\D }{\Delta }
\newcommand{\G }{\Gamma}
\renewcommand{\l }{\lambda }
\newcommand{\n }{\nabla }
\newcommand{\s }{\sigma }
\renewcommand{\t }{\tau }
\renewcommand{\o }{\omega }
\renewcommand{\O }{\Omega }
\newcommand{\ov}{\overline}
\newcommand{\be}{\begin{equation}}
\newcommand{\ee}{\end{equation}}
\newcommand{\ti}{\widetilde}
\newcommand{\al}{\alpha}
\renewcommand{\k}{\kappa}
\newcommand{\calH }{\mathcal{H}}
\newcommand{\calE }{\mathcal{E}}
\newcommand{\calL }{\mathcal{L}}
\newcommand{\N}{\mathbb{N}}
\newcommand{\cH}{{\mathcal H}}
\newcommand{\cL}{{\mathcal L}}
\newcommand{\supp}{{\rm supp}}
\newcommand{\eps}{\varepsilon}
\renewcommand{\epsilon}{\varepsilon}
\newcommand{\Ds}{ (-\D)^s}
\title{Nonradiality of second fractional eigenfunctions of thin annuli}
\author[]
{Sidy M. Djitte and Sven Jarohs}
\address{African Institute for Mathematical Sciences in Senegal (AIMS Senegal), 
	KM2, Route de Joal, B.P. 14 18. Mbour, S\'en\'egal.}
\email{sidy.m.djitte@aims-senegal.org}
\address{Goethe-Universit\"{a}t Frankfurt, Institut f\"{u}r Mathematik.
	Robert-Mayer-Str. 10, D-60629 Frankfurt, Germany.}
\email{jarohs@math.uni-frankfurt.de}
\thanks{\textit{MSC2020}:
\textit{Primary:}
47A75, % 	Eigenvalue problems for linear operators
47G30; %	Pseudodifferential operators
\textit{Secondary:}
35B06, %	Symmetries, invariants, etc. in context of PDEs
}
\thanks{\textit{Keywords}: fractional Laplacian, second eigenfunction, extremal eigenvalue, obstacle.
}
\begin{document}

 \begin{abstract} In the present paper, we study properties of the second Dirichlet eigenvalue of the fractional Laplacian of annuli-like domains and the corresponding eigenfunctions. In the first part, we consider an annulus with inner radius $R$ and outer radius $R+1$. We show that for $R$ sufficiently large any corresponding second eigenfunction of this annulus is nonradial. In the second part, we investigate the second eigenvalue in domains of the form $B_1(0)\setminus \overline{B_{\tau}(a)}$, where $a$ is in the unitary ball and $0<\tau<1-|a|$. We show that this value is maximized for $a=0$, if the set $B_1(0)\setminus \overline{B_{\tau}(0)}$ has no radial second eigenfunction. We emphasize that the first part of our paper implies that this assumption is indeed nonempty.
   \noindent 
 \end{abstract}

\maketitle

\section{introduction}
Let $\O$ be a radial open bounded subset of $\R^N$, $N\geq 2$. In the first part of this note, we are interested in symmetry properties of weak solutions to the eigenvalue problem 
\be\label{eq-eigenvalue-problem}
\Ds\phi = \mu\phi\quad\text{in}\quad\O\qquad\text{and}\qquad \phi=0\quad\text{in}\quad\R^N\setminus\O,
\ee
where $\Ds$ is the fractional Laplace operator,
which is defined, for $\phi\in C^\infty_c(\R^N)$ by 
$$
\Ds\phi(x)=\frac{b_{N,s}}{2}\int_{\R^N}\frac{2\phi(x)-\phi(x+y)-\phi(x-y)}{|y|^{N+2s}}dy,\qquad x\in \R^N
$$
with $b_{N,s}=\frac{s4^s\G(\frac{N}{2}+s)}{\pi^{\frac{N}{2}}\G(1-s)}$. Here, a function $\phi$ is called a weak solution of \eqref{eq-eigenvalue-problem}, if $\phi\in\calH^s_0(\O)$ and for all $\psi\in\calH^s_0(\O)$ it holds that 
$$
\calE_s(\phi,\psi):=\frac{b_{N,s}}{2}\int_{\R^{2N}}\frac{(\phi(x)-\phi(y))(\psi(x)-\psi(y))}{|x-y|^{N+2s}}dxdy=\mu\int_{\O}\phi(x)\psi(x)dx.
$$
As usual, $\cH^s_0(\O)$ is defined as the completion of $C^\infty_c(\O)$ with respect to the norm $\|\psi\|^2_{H^s(\R^N)}:=\calE_s(\psi,\psi)$. Recall here, that since $\O$ is bounded, it follows that $\calE_s$ is a scalar product on $\cH^s_0(\O)$. Recall moreover that if $\O$ has a continuous boundary, then the space $\calH^s_0(\O)$ coincides also with the space $\{v\in L^2_{loc}(\R^N):\calE_s(v,v)<\infty\;\;\text{and}\;\; v=0\quad\text{in}\quad\R^N\setminus\O\}$. We refer to \cite{PG,Vetal} for more details and for more information about fractional Sobolev spaces. By standard theory it follows that there is an increasing sequence of real numbers $0<\l_{1,s}(\O)<\l_{2,s}(\O)\leq\ldots\leq\l_{j,s}(\O)\leq\ldots\nearrow +\infty$ such that for each $\l_{j,s}(\O)$ the equation \eqref{eq-eigenvalue-problem} has a nontrivial solution $\phi_j$ and that the first eigenvalue $\l_{1,s}(\O)$ is simple, i.e, the corresponding solution $\phi_1$ is unique up to a multiplicative constant and, moreover, can be chosen to be positive. As a consequence of the latter, it is known that $\phi_1$ always inherits the symmetry properties of the underlying domain $\O$. In particular when $\O$ is radial, one obtains that $\phi_1$ has to be radial. However for $j\geq 2$, since simplicity fails in general, it is a nontrivial task to decide whether
the corresponding solutions would inherit symmetry properties of the domain $\O$ or not.\\
A conjecture by Ba\~nuelos and Kulczycki (see \cite{Dyda}) states that when $\O$ is a ball and $j=2$, then a solution corresponding to \eqref{eq-eigenvalue-problem} with $\mu=\l_{2,s}(\O)$ cannot be radial. Several partial answers were obtained in \cite{Betal,RK,Dyda,Fereira,MK} and it is only recently that the conjecture is fully solved in \cite{Fetal} by estimating the Morse index of a radial eigenfunction (see also \cite{Betal}, with a different approach to prove this conjecture). The study of the Morse index of radial functions was in particular studied in \cite{AP04} for the Laplacian, that is the case $s=1$, in balls and annuli. In \cite{Fetal} this approach has been extended to the nonlocal framework in balls. Our first result concerns the extension of the Ba\~nuelos-Kulczycki conjecture to annuli and is in the spirit of \cite{AP04,Fetal} in annuli. We show the following for $A_R:=\{x\in\R^N: R<|x|<R+1\}$ with $R>0$.
\begin{theorem}\label{eq-main-thm}
There is $R_0>0$ such that for any $R\geq R_0$ any second eigenfunction corresponding to $\l_{2,s}(A_R)$ is nonradial.
\end{theorem}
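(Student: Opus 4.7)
The plan is to follow the Morse-index / Courant--Fischer strategy of \cite{AP04, Fetal}: decompose the eigenvalue problem into spherical-harmonic sectors, and show that in the thin annulus the first eigenvalue of the $k=1$ sector slips below the second radial eigenvalue when $R$ is large. Since $\Ds$ commutes with rotations, $\calH^s_0(A_R)$ splits orthogonally (in both $\calE_s$ and $L^2$) into sectors $\calH^s_{0,k}(A_R)$ spanned by functions $v(|x|)\, Y(x/|x|)$ with $Y$ a spherical harmonic of degree $k \geq 0$. Let $\mu_{j,k}(R)$ denote the $j$-th eigenvalue of the restricted (essentially one-dimensional) problem in the $k$-th sector. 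Then $\l_{1,s}(A_R) = \mu_{1,0}(R)$, and if $\l_{2,s}(A_R)$ admitted a radial eigenfunction we would have $\l_{2,s}(A_R) = \mu_{2,0}(R)$, forcing $\mu_{1,k}(R) \geq \mu_{2,0}(R)$ for every $k \geq 1$. The goal is to violate this inequality for $k = 1$ and $R$ large.

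The first ingredient is a uniform lower bound $\mu_{2,0}(R) - \mu_{1,0}(R) \geq \d_0 > 0$, independent of $R$. Rescaling $r = R + t$ with $t \in [0,1]$, the Jacobian $r^{N-1} = R^{N-1}(1 + O(1/R))$ is asymptotically constant, and the double angular integral reduces, via $|x - y|^2 \approx (r - r')^2 + R^2 |\theta - \theta'|^2$, to (after dividing both $\calE_s$ and $\|\cdot\|^2_{L^2}$ by $R^{N-1}$) a positive multiple of the one-dimensional fractional Dirichlet form on $(0,1)$ with kernel behaving like $|t - t'|^{-1-2s}$. Stability of discrete eigenvalues under $\G$-convergence of quadratic forms then yields $\mu_{j,0}(R) \to \tilde\mu_j$, where $\tilde\mu_1 < \tilde\mu_2$ are the eigenvalues of the limiting 1D problem, which produces the desired uniform gap.

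The second ingredient is an upper bound $\mu_{1,1}(R) \leq \mu_{1,0}(R) + o(1)$ as $R \to \infty$. Use as test function $u(x) = \phi(|x|)\, Y(x/|x|)$, where $\phi = \phi_{1,0,R}$ is the first radial eigenfunction of $A_R$ and $Y(\theta) = \theta_1$. Using the symmetric splitting
\[
\phi_1 Y_1 - \phi_2 Y_2 \,=\, \tfrac{1}{2}(Y_1 + Y_2)(\phi_1 - \phi_2) \,+\, \tfrac{1}{2}(\phi_1 + \phi_2)(Y_1 - Y_2)
\]
(with subscripts denoting evaluation at $x$ and $y$) together with rotational invariance of the kernel, one finds $\calE_s(u,u) = \mu_{1,0}(R)\|u\|_{L^2}^2 + I_2 + I_3$, where
\[
I_2 \,=\, \frac{b_{N,s}}{4}\int_{A_R}\int_{A_R}\bigl(\phi(|x|)^2 + \phi(|y|)^2\bigr)\,\frac{(Y(x/|x|) - Y(y/|y|))^2}{|x-y|^{N+2s}}\,dx\,dy
\]
and $|I_3| \leq 2\sqrt{I_2 \cdot \mu_{1,0}(R)\|u\|_{L^2}^2}$ by Cauchy--Schwarz. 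The estimates $|Y(\theta) - Y(\theta')| \leq |\theta - \theta'|$ and $|x - y| \geq c(|r - r'| + R|\theta - \theta'|)$, combined with the change of variables $z = R(\theta - \theta')$ in the angular double integral, yield $I_2 \leq C R^{-2s}\|u\|_{L^2}^2$. Consequently $\mu_{1,1}(R) \leq \mu_{1,0}(R) + C R^{-s}$.

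Combining the two ingredients, for $R$ large enough that $CR^{-s} < \d_0$, we have $\mu_{1,1}(R) < \mu_{1,0}(R) + \d_0 \leq \mu_{2,0}(R)$, contradicting the assumed radiality. The main obstacle will be the sharp control of the angular contribution $I_2$ and its cross term $I_3$: one needs a precise scaling estimate for the double integral in spherical coordinates, handling both the short-range singular behavior near the diagonal and the bookkeeping of the spherical-harmonic decomposition of $\calE_s$. The limiting argument of the radial gap in Step 2 is technically standard but requires verifying $\G$-convergence of the rescaled quadratic forms and a uniform boundary regularity estimate for the 1D radial eigenfunctions, so as to rule out any degeneration of the gap $\mu_{2,0}(R) - \mu_{1,0}(R)$ as $R \to \infty$.
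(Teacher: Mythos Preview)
Your approach is genuinely different from the paper's. The paper argues by contradiction via Morse index: assuming a radial second eigenfunction $u_R$ exists, it uses a compactness argument (Sections~\ref{sec-3}--\ref{sec-4}, culminating in Corollary~\ref{eq-convergence-of-normal-derivative}) to show that $u_R/d^s$ has opposite signs on the inner and outer boundary spheres, and from this Hopf-type information constructs $N$ additional negative directions $d_R^j\in\cH^s_0(A_R)$ out of the partial derivatives $\partial_{x_j}u_R$, forcing Morse index $\geq N+1$. Your route --- comparing sector eigenvalues and showing $\mu_{1,1}(R)<\mu_{2,0}(R)$ via a test function --- avoids the boundary analysis entirely and is closer in spirit to the argument of \cite{Betal} for the ball.

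However, your test-function step contains a real error. The symmetric splitting of $\phi_1Y_1-\phi_2Y_2$ does \emph{not} give the identity $\calE_s(u,u)=\mu_{1,0}(R)\|u\|_{L^2}^2+I_2+I_3$: the contribution $\frac{b_{N,s}}{8}\int\!\int\frac{(Y_1+Y_2)^2(\phi_1-\phi_2)^2}{|x-y|^{N+2s}}$ is not equal to $\mu_{1,0}(R)\|u\|^2$ (rotational averaging of $(Y_1+Y_2)^2$ produces $\tfrac{2}{N}(1+\theta\cdot\theta')$, not a constant). The correct and much simpler route is to test the eigenfunction equation $\calE_s(\phi,\cdot)=\mu_{1,0}\langle\phi,\cdot\rangle_{L^2}$ against $\psi=\phi Y^2\in\cH^s_0(A_R)$; elementary algebra then yields the \emph{exact} formula
\[
\calE_s(\phi Y,\phi Y)\;=\;\mu_{1,0}(R)\,\|\phi Y\|_{L^2}^2\;+\;\frac{b_{N,s}}{2}\int_{A_R}\!\int_{A_R}\frac{\phi(|x|)\,\phi(|y|)\bigl(Y(\tfrac{x}{|x|})-Y(\tfrac{y}{|y|})\bigr)^2}{|x-y|^{N+2s}}\,dx\,dy,
\]
with no cross term $I_3$ at all. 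The remainder is bounded by your $I_2$, and since by scaling $\int_{\R^N}\frac{(\theta_1(x)-\theta_1(y))^2}{|x-y|^{N+2s}}\,dy=c_{N,s}|x|^{-2s}$, one obtains $\mu_{1,1}(R)\leq\mu_{1,0}(R)+CR^{-2s}$ directly (sharper than your $CR^{-s}$). So the strategy survives, but not via the computation you wrote. Finally, be aware that your first ingredient --- the uniform gap $\mu_{2,0}(R)-\mu_{1,0}(R)\geq\delta_0$ --- is not a soft $\Gamma$-convergence statement: one must show that the rescaled second \emph{radial} eigenfunction converges to something $L^2$-orthogonal to the first one-dimensional eigenfunction, and this is precisely the content of the paper's Sections~\ref{sec-3}--\ref{sec-4} (the kernel asymptotics, the killing-measure bounds, and the limiting orthogonality argument). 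That analytic machinery is unavoidable in either approach.
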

Note that due to the scaling properties of the fractional Laplacian we immediately deduce the following corollary to this result.
\begin{corollary}\label{eq-main-cor}
There is $\t_0\in(0,1)$ such that for any $\tau\in[\t_0,1)$ any second eigenfunction corresponding to $\l_{2,s}(B_1(0)\setminus B_{\t}(0))$ is nonradial.
\end{corollary}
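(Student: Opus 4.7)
The plan is to use the scaling property of the fractional Laplacian to set up a bijection between the eigenfunctions of $A_R$ and those of the thin shell $B_1(0)\setminus \overline{B_\tau(0)}$, reducing the corollary to Theorem \ref{eq-main-thm}.

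First, I would recall that $\Ds$ is homogeneous of order $2s$: if $u\in \cH_0^s(A_R)$ is a weak solution of $\Ds u=\mu u$ in $A_R$, then for any $\lambda>0$ the rescaled function $v(x):=u(\lambda x)$ satisfies
$$
\Ds v(x)=\lambda^{2s}(\Ds u)(\lambda x)=\lambda^{2s}\mu\, v(x)
$$
on the dilated domain $\{x\in\R^N:\lambda x\in A_R\}=\{R/\lambda<|x|<(R+1)/\lambda\}$, while $v$ vanishes outside. This operation is an isometry (up to a multiplicative constant) between $\cH_0^s(A_R)$ and $\cH_0^s(\lambda^{-1} A_R)$, so it maps the $j$-th eigenfunctions to $j$-th eigenfunctions.

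Second, I would choose $\lambda=R+1$ so that $\lambda^{-1}A_R=\{R/(R+1)<|x|<1\}=B_1(0)\setminus \overline{B_{\tau}(0)}$ with
$$
\tau=\tau(R):=\frac{R}{R+1}\in(0,1).
$$
The map $R\mapsto \tau(R)$ is strictly increasing on $(0,\infty)$, continuous, and $\tau(R)\to 1$ as $R\to\infty$. Under $u\mapsto v$, an eigenfunction of $B_1(0)\setminus \overline{B_\tau(0)}$ is radial if and only if the corresponding eigenfunction of $A_R$ is radial, because the scaling $x\mapsto (R+1)x$ is a radial map that commutes with the action of $O(N)$.

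Finally, set $\tau_0:=\tau(R_0)=R_0/(R_0+1)\in(0,1)$, where $R_0$ is the constant from Theorem \ref{eq-main-thm}. For any $\tau\in[\tau_0,1)$ the corresponding $R=\tau/(1-\tau)$ satisfies $R\geq R_0$, so by Theorem \ref{eq-main-thm} any second eigenfunction of $A_R$ is nonradial; transporting this through the scaling yields that any second eigenfunction of $B_1(0)\setminus \overline{B_\tau(0)}$ is nonradial, proving the corollary. The argument is essentially a one-line reduction; the only point requiring some care is noting that the scaling bijection preserves the ordering of eigenvalues and the radial/nonradial dichotomy, both of which are immediate from the explicit form of the change of variables.
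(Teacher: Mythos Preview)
Your proposal is correct and is exactly the approach indicated in the paper, which simply remarks that the corollary follows from Theorem \ref{eq-main-thm} ``due to the scaling properties of the fractional Laplacian.'' Your choice $\lambda=R+1$, $\tau=R/(R+1)$, $\tau_0=R_0/(R_0+1)$ makes this precise and nothing more is needed.
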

To prove Theorem \ref{eq-main-thm}, the main point is to establish that any second radial eigenfunction $u_R$ of $A_R$ satisfies $\frac{u_R}{d^s}(R)\frac{u_R}{d^s}(R+1)<0$ for $R$ sufficiently large. Here, $d$ denotes the distance function to the boundary of $A_R$. Once we have this, one can argue exactly as in \cite{Fetal} to conclude the proof of the Theorem. As already mentioned in \cite{Fetal}, such property cannot be proved by using the classical Hopf lemma since $u_R$ is sign changing; and it also does not follow from the fractional Pohozaev identity (see e.g \cite{RS}) either. To obtain the latter, we use a compactness argument to show that, along some subsequence, $\psi_R:=\frac{R^{\frac{N-1}{2}}u_R(\cdot+R)}{d^s}\rightarrow \frac{\phi_2}{d^s}$  in $C^0([0,1])$ where $\phi_2$ is a second eigenfunction of the unit interval $(0,1)$. From there we deduce the claim since the limiting problem has already the desired property by the result of \cite{Fetal}.\\
We note that indeed we expect the conclusion of Corollary \ref{eq-main-cor} to hold for any $\t\in(0,1)$, and thus we only give here a partial answer. Moreover, though Theorem \ref{eq-main-thm} and Corollary \ref{eq-main-cor} are stated for $N\geq 2$, the same can be shown for $N=1$ with a similar but simpler proof to the one we present here.\\

Our next result concerns the maximization of $\l_{2,s}$ of certain \textit{shifted} annuli. To be precise, given $\t\in(0,1)$ we aim at finding
$$
\sup_{a\in B_{1-\t}(0)}\l_{2,s}(B_1(0)\setminus B_\t(a)).
$$
For the classical case of the Laplacian the corresponding problem was studied in \cite{SK}, where it was proven that concentric spheres maximize the second eigenvalue, i.e. $\sup_{a\in B_{1-\tau}(0)}\l_{2,1}(B_1(0)\setminus B_\tau(a))= \l_{2,1}(B_1(0)\setminus B_{\t}(0))$. We show the following.

\begin{theorem}\label{thm:max-prob} Let $\tau\in(0,1)$ and assume any second eigenfunction of the annulus $B_1(0)\setminus\ov{B_\t(0)}$ cannot be radial. Then 
\be\label{max-prob}
\l_{2,s}\big(B_1(0)\setminus\ov{B_\t(a)}\big)\leq \l_{2,s}\big(B_1(0)\setminus\ov{B_\t(0)}\big) \quad\text{for all $a\in B_{1-\t}(0)$}
\ee
and equality holds in \eqref{max-prob} if and only if $a=0$.
\end{theorem}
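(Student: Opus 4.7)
The plan is to use the Courant--Fischer characterization of $\l_{2,s}$ together with an antisymmetric-sector reduction, in the spirit of Kesavan's approach in \cite{SK} for the classical case. By the rotational invariance of the fractional Laplacian, rotate coordinates so $a = a_1 e_1$ with $a_1\in(0,1-\t)$, and set $\O_0 := B_1(0)\setminus\ov{B_\t(0)}$ and $\l:=\l_{2,s}(\O_0)$. The $\l$-eigenspace $E\subset\calH^s_0(\O_0)$ is $O(N)$-invariant since $\Ds$ commutes with isometries, and the hypothesis that no element of $E$ is radial says the trivial $O(N)$-representation is absent from $E$. Decomposing $E$ into spherical harmonic sectors, some degree $\ell\geq 1$ contributes, and within that sector I may choose an eigenfunction $\phi\in E$ that is odd in $x_N$: $\phi(x_1,\dots,x_{N-1},-x_N) = -\phi(x_1,\dots,x_{N-1},x_N)$.

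For any $\O$ symmetric under $x_N\mapsto -x_N$, define
\[
\L^{-}(\O) \;:=\; \inf\left\{\frac{\calE_s(u,u)}{\|u\|^2_{L^2(\O)}}\;:\;0\neq u\in\calH^s_0(\O),\ u\text{ odd in }x_N\right\}.
\]
The eigenfunction $\phi$ gives $\L^{-}(\O_0)\leq\l$, and conversely any odd-in-$x_N$ function is $L^2$-orthogonal to the positive (hence even-in-$x_N$) first eigenfunction of $\O_0$, so the hypothesis together with the spherical-harmonic structure forces $\L^{-}(\O_0)=\l$. Since $a\cdot e_N=0$, the domain $\O_a$ is also $x_N$-symmetric, and its positive first eigenfunction is likewise even in $x_N$, so by the min-max principle
\[
\l_{2,s}(\O_a)\;\leq\;\L^{-}(\O_a).
\]
The proof therefore reduces to the core comparison $\L^{-}(\O_a)\leq\L^{-}(\O_0)=\l$, with strict inequality when $a\neq 0$.

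To prove this comparison, I would produce an admissible test function $v\in\calH^s_0(\O_a)$ odd in $x_N$ with $\calE_s(v,v)\leq\l\|v\|^2_{L^2}$, constructed from $\phi$ by reflecting across the perpendicular bisector $H:=\{x_1=a_1/2\}$ of $\{0,a\}$. Letting $T$ denote this reflection, $T$ fixes the $x_N$-coordinate, so $\phi\circ T$ remains odd in $x_N$. The enabling geometric fact is the inclusion $\O_0\cap H_-\subseteq\O_a$: for $x_1\leq a_1/2$ one has $|x-a|^2 = |x|^2 + a_1(a_1 - 2x_1)\geq|x|^2>\t^2$, so $x\in\O_0$ forces $x\notin\ov{B_\t(a)}$; a symmetric inclusion $T(\O_0\cap H_+)\subseteq\O_a$ holds on $H_+$. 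One then builds $v$ piecewise as $\phi$ on $H_-$ and $\phi\circ T$ on $H_+$, followed by truncation outside $\O_a$.

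\textbf{Main obstacle.} The delicate step is the seminorm estimate $\calE_s(v,v)\leq\calE_s(\phi,\phi)$. Because the fractional Gagliardo seminorm is nonlocal, the piecewise construction produces cross-contributions to the double integral coming from pairs $(x,y)$ with endpoints on opposite sides of $H$, and these must be controlled carefully (including the loss of mass on the subset where $T(\O_0\cap H_-)$ fails to lie in $\O_a$). The natural tool is a polarization-type inequality for $[\,\cdot\,]_{H^s}$: rearranging a function across a hyperplane does not increase the fractional seminorm while preserving the $L^2$ mass, with strict inequality unless the function is already invariant under $T$. Since $\phi$ is rotation-conjugate to an eigenfunction centered at the origin rather than at $a/2$, it cannot be $T$-invariant for $a\neq 0$, whence the polarization inequality is strict and yields $\l_{2,s}(\O_a)<\l$. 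This, combined with the trivial equality at $a=0$, closes the equality case and completes the proof of Theorem \ref{thm:max-prob}.
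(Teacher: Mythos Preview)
Your reduction to the first antisymmetric eigenvalue $\Lambda^{-}(\Omega)$ (the paper's $\lambda_{1,s}^{-}$) is correct and coincides with the paper's strategy: one has $\lambda_{2,s}(\Omega_a)\le\Lambda^{-}(\Omega_a)$ always, and the nonradiality hypothesis forces $\Lambda^{-}(\Omega_0)=\lambda_{2,s}(\Omega_0)$. The divergence, and the gap, is in the core step $\Lambda^{-}(\Omega_a)<\Lambda^{-}(\Omega_0)$ for $a\neq 0$.

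Your reflection construction does not deliver this inequality. First, there is a mismatch between the inclusion you state and the one you need: for $v=\phi\circ T$ on $H_+$ the support is $T(\Omega_0\cap H_-)$, not $T(\Omega_0\cap H_+)$, and $T(\Omega_0\cap H_-)$ genuinely exits $B_1$ (take $x_1$ close to $-1$), so truncation is unavoidable and the Rayleigh-quotient comparison collapses. Second, and more fundamentally, the ``polarization-type inequality'' you invoke is not available here: genuine polarization across $H$ does \emph{not} preserve oddness in $x_N$ (since $(-\phi)^H\neq -(\phi^H)$), so $\phi^H$ is inadmissible for $\Lambda^{-}(\Omega_a)$; and if one instead polarizes $\phi$ on $\{x_N>0\}$ and extends oddly, the $H^s$-seminorm of an odd function contains the cross term $\iint_{\{x_N>0\}^2}(u(x)+u(y))^2\,|x-\sigma_N y|^{-N-2s}\,dx\,dy$, which polarization can \emph{increase}. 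This is exactly why the paper does not argue by rearrangement. Instead it shows that $a\mapsto\Lambda^{-}(\Omega_a)$ is differentiable with shape derivative
\[
(\Lambda^{-})'(a)=\Gamma^2(1+s)\int_{\partial B_\tau(ae_1)}\Bigl(\frac{u}{d^s}\Bigr)^2\,e_1\!\cdot\!\nu\,dx,
\]
obtained via a fractional Hadamard formula, and then proves this boundary integral is strictly negative for $a>0$ by a moving-plane comparison: reflect $u$ across $\{x_1=a\}$ and apply a maximum principle for \emph{doubly} antisymmetric supersolutions (antisymmetric simultaneously in $x_N$ and in $x_1$ about $a$), which yields the required sign of $(u/d^s)^2-(\bar u/d^s)^2$ on $\partial B_\tau(ae_1)$. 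That maximum principle is the missing ingredient in your approach.
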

Note that by Corollary \ref{eq-main-cor}, the assumption of Theorem \ref{thm:max-prob} is satisfied when $\t$ is sufficiently close to $1$. To prove Theorem \ref{thm:max-prob}, a key observation is that the second eigenvalue of an eccentric annulus is always controlled by its first antisymmetric eigenvalue and that for an annulus the two numbers coincide. These two properties allow to reduce the proof of \eqref{max-prob} into proving that the first antisymmetric eigenvalue of an eccentric annulus decreases when the obstacle (the inner ball) moves from the center to the boundary of the unitary ball. To get the latter, we use a shape derivative argument combined with the maximum principle for doubly antisymmetric functions that we established in \cite{SS}. We refer to Section \ref{sec-6} for more details.\\

This paper is organized as follows: Section \ref{sec-2} and \ref{sec-3} contain preliminary results that will be used later in Section \ref{sec-4} to obtain uniform H\"older estimates of the fractional normal derivative. Section \ref{sec-5} is devoted to the proof Theorem \ref{eq-main-thm} and in the last section we prove Theorem \ref{thm:max-prob}.\\

\textbf{Acknowledgements.} This work is supported by DAAD and BMBF (Germany) within the project 57385104. The authors thank Mouhamed Moustapha Fall and Tobias Weth for helpful discussions. We also thank the anonymous referee for useful suggestions that improved the paper.

\section{Preliminary results}\label{sec-2}

Let $\Gamma(z)=\int_0^{\infty}t^{z-1}e^{-t}\ dt$, $z>0$ be the Gamma-function. In the following, we use strongly the identity
$$
\int_0^{\infty}\frac{h^{a-1}}{(1+h)^{a+b}}\ dh=\frac{\Gamma(a)\Gamma(b)}{\Gamma(a+b)}\quad\text{for $a,b>0$.}
$$
\begin{lemma}\label{eq-first-lemma}
Let $N\in \N$, $N\geq 2$, $s\in(0,1)$, and consider the function
$$ 
\psi:[0,1]\to \R,\quad\psi(t)=\left\{\begin{aligned} &t^{1+2s}\int_0^1\frac{(h(1-h))^{\frac{N-3}{2}}}{(t^2+h)^{\frac{N+2s}{2}}}\ dh, && t>0;\\
&\frac{\Gamma(\frac{1}{2}+s)\Gamma(\frac{N-1}{2})}{\Gamma(\frac{N+2s}{2})},&&t=0.\end{aligned}\right.
$$
Then the following holds.
\begin{enumerate}
\item If $s> \frac12$, then $\psi\in C^{0,1}([0,1])$.
\item If $s\in(0,\frac12)$, then $\psi\in C^{0,2s}([0,1])$.
\item If $s=\frac{1}{2}$, then $\psi\in C^{0,\sigma}([0,1])$ for all $\sigma\in(0,1)$.
\end{enumerate}
In particular, $\psi\in C^{s+\d}([0,1])$ for some $\d=\d(s)>0$.
\end{lemma}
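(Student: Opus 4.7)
The plan is to use the scaling substitution $h = t^2 u$ to normalize the integrand and reveal the singular behavior at $t = 0$, then to derive H\"older estimates for $\psi(t) - \psi(0)$ and for $\psi'(t)$ by case analysis on $s$, and finally to piece these together into a global bound on $[0,1]$.

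The substitution gives, for $t > 0$,
$$
\psi(t) = \int_0^{1/t^2} \frac{u^{(N-3)/2}(1-t^2u)^{(N-3)/2}}{(1+u)^{(N+2s)/2}}\,du.
$$
Since $(N-3)/2 > -1$ and $(N+2s)/2 - (N-3)/2 - 1 = (1+2s)/2 > 0$, the function $u^{(N-3)/2}(1+u)^{-(N+2s)/2}$ is integrable on $(0,\infty)$; for $N \geq 3$ it even majorizes the integrand, so dominated convergence combined with the Beta-function identity recalled just before the lemma recovers $\lim_{t\to 0^+}\psi(t) = \psi(0)$. For $N = 2$ the endpoint band $[1/(2t^2), 1/t^2]$, where $(1-t^2u)^{-1/2}$ blows up, requires a separate calculation that contributes only an $O(t^{1+2s})$ term.

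For the H\"older behavior at $t = 0$, decompose
$$
\psi(t) - \psi(0) = \int_0^{1/t^2}\!\frac{u^{(N-3)/2}[(1-t^2u)^{(N-3)/2}-1]}{(1+u)^{(N+2s)/2}}du \;-\; \int_{1/t^2}^\infty \frac{u^{(N-3)/2}}{(1+u)^{(N+2s)/2}}du.
$$
The tail is $O(t^{1+2s})$ from the $u^{-(3+2s)/2}$ decay at infinity. Reverting the first integral via $v = t^2 u$ pulls out a factor $t^{1+2s}$ and reduces matters to $\int_0^1 v^{(N-3)/2}[(1-v)^{(N-3)/2}-1](t^2+v)^{-(N+2s)/2}dv$. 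Using $|(1-v)^{(N-3)/2}-1|\leq Cv$ on $[0,1/2]$ and direct boundedness on $[1/2,1]$ (with the integrable $(1-v)^{-1/2}$ singularity handled separately when $N=2$), the further rescaling $v = t^2 w$ brings the estimate to the model integral $\int_0^{1/(2t^2)} w^{(N-1)/2}(1+w)^{-(N+2s)/2}dw$, which is respectively $O(1)$, $O(|\log t|)$, or $O(t^{2s-1})$ in the three regimes $s < 1/2$, $s = 1/2$, $s > 1/2$. Collecting, $|\psi(t)-\psi(0)| \leq C t^{\min(2,1+2s)}$ for $s \neq 1/2$ and $\leq C t^2|\log t|$ for $s = 1/2$, which yields the pointwise H\"older bound at the origin in each of the three cases.

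To upgrade to uniform H\"older regularity on $[0,1]$, note that $\psi$ is $C^\infty$ on $(0,1]$ by differentiation under the integral. Writing $\psi'(t) = t^{-1}[(1+2s)A(t) - (N+2s)A_1(t)]$, where $A(t)$ and $A_1(t)$ denote the rescaled integrals obtained from $\psi(t) = t^{1+2s}g(t)$ with denominators $(1+u)^{(N+2s)/2}$ and $(1+u)^{(N+2s)/2+1}$, the Beta-function identity $(1+2s)A(0) = (N+2s)A_1(0)$ supplies the leading-order cancellation; applying the same scaling estimate to $A(t)-A(0)$ and $A_1(t)-A_1(0)$ gives $|\psi'(t)| \leq Ct^{\min(2s,1)}$ for $s \neq 1/2$ and $|\psi'(t)| \leq Ct|\log t|$ for $s = 1/2$. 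The uniform H\"older bound then follows by the standard dichotomy on pairs $(t_1, t_2)$: for $t_2 \leq 2t_1$ use the mean value theorem with the derivative bound, and for $t_1 \leq t_2/2$ use $|\psi(t_1)-\psi(t_2)| \leq |\psi(t_1)-\psi(0)| + |\psi(t_2)-\psi(0)|$. The main technical obstacle is the case-by-case bookkeeping in the rescaled integral, aggravated by the logarithmic factor when $s = 1/2$ and by the integrable singularity of $(1-v)^{-1/2}$ when $N = 2$; the concluding ``In particular\ldots'' statement then follows by taking $\delta = 1-s$ for $s > 1/2$, $\delta = s$ for $s < 1/2$, or any $\delta \in (0, 1/2)$ for $s = 1/2$.
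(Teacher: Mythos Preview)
Your approach is correct and genuinely different from the paper's, though there is a transposition error in one intermediate step. In the sentence ``which is respectively $O(1)$, $O(|\log t|)$, or $O(t^{2s-1})$ in the three regimes $s < 1/2$, $s = 1/2$, $s > 1/2$'', the first and third cases are swapped: the model integral $\int_0^{1/(2t^2)} w^{(N-1)/2}(1+w)^{-(N+2s)/2}\,dw$ converges (hence is $O(1)$) precisely when $s>\tfrac12$, and diverges like $t^{2s-1}$ when $s<\tfrac12$. With the cases in the correct order your conclusion $|\psi(t)-\psi(0)|\le Ct^{\min(2,1+2s)}$ follows as you claim.

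The paper proceeds quite differently: it first disposes of $N=3$ by an explicit antiderivative, and for $N\neq 3$ it applies the substitution $h=\tau/(1+\tau)$ together with the change of variable $T=t^2/(1+t^2)$ to write $\psi(t)=\Psi(T)$ with
\[
\Psi(T)=(1-T)^{\frac{N-1}{2}}\int_0^\infty \frac{h^{\frac{N-3}{2}}}{(1+Th)^{\frac{N}{2}-1-s}(1+h)^{\frac{N+2s}{2}}}\,dh.
\]
It then bounds $|\Psi(A)-\Psi(B)|$ directly, using the mean value theorem when $s>\tfrac12$, a pointwise inequality for differences of negative powers when $s<\tfrac12$, and H\"older's inequality when $s=\tfrac12$; the H\"older exponent for $\psi$ is recovered because $t\mapsto T$ is Lipschitz. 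This avoids differentiating $\psi$ altogether and sidesteps the $(1-v)^{-1/2}$ singularity at $N=2$, since after the substitution the integrand no longer sees the endpoint $h=1$.

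Your route---scaling $h=t^2u$, isolating the tail, exploiting the Beta-function cancellation $(1+2s)A(0)=(N+2s)A_1(0)$ to control $\psi'$, and finishing with the near/far dichotomy---is a perfectly valid alternative. In fact it yields more: since $|\psi'(t)|\le Ct^{\min(2s,1)}$ (with an extra $|\log t|$ at $s=\tfrac12$) is bounded on $(0,1]$ for every $s\in(0,1)$, your argument actually shows $\psi\in C^{0,1}([0,1])$ in all cases, which is stronger than the lemma's statement for $s\le\tfrac12$. The paper's approach, by working through the intermediate variable $T$ (for which $\Psi$ is genuinely only $C^{0,2s}$ when $s<\tfrac12$), loses this sharpness.
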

\begin{proof}
First note that if $N=3$, then it directly follows that $\psi(0)=\frac{\Gamma(\frac{1+2s}{2})}{\Gamma(\frac{3+2s}{2})}=\frac{\Gamma(\frac{1+2s}{2})}{\Gamma(1+\frac{1+2s}{2})}=\frac{2}{1+2s}$ and for $t>0$ we have
\begin{align*}
\psi(t)&=t^{1+2s}\int_0^1\frac{1}{(t^2+h)^{\frac{3+2s}{2}}}\ dh=\frac{2t^{1+2s}}{1+2s}\Big(\frac{1}{t^{1+2s}}-\frac{1}{(1+t^2)^{\frac{1+2s}{2}}}\Big)=\frac{2}{1+2s}\Big(1-\frac{t^{1+2s}}{(1+t^2)^{\frac{1+2s}{2}}}\Big),
\end{align*}
so that it follows easily that $\psi\in C^{0,1}([0,1])$.\\
In the following let $N\neq3$. We begin by transforming the integral slightly. Note that for $t>0$ and with the substitution $h=f(\tau):=\frac{\tau}{1+\tau}$ we find
\begin{align*}
\psi(t)&=t^{1+2s}\int_0^1\frac{(h(1-h))^{\frac{N-3}{2}}}{(t^2+h)^{\frac{N+2s}{2}}}\ dh=t^{1+2s}\int_0^{\infty}\frac{f'(\tau) (f(\tau)(1-f(\tau)))^{\frac{N-3}{2}}}{(t^2+f(\tau))^{\frac{N+2s}{2}}}\ d\tau\\
&=t^{1+2s}\int_0^{\infty}\frac{ \tau^{\frac{N-3}{2}}}{(1+\tau)^{N-1}(t^2+\frac{\tau}{1+\tau})^{\frac{N+2s}{2}}}\ d\tau\\
&=t^{1+2s}\int_0^{\infty}\frac{ \tau^{\frac{N-3}{2}}}{(1+\tau)^{\frac{N}{2}-1-s}(t^2+(1+t^2)\tau)^{\frac{N+2s}{2}}}\ d\tau\\
&=\frac{t^{1+2s}}{(1+t^2)^{\frac{N+2s}{2}}}\int_0^{\infty}\frac{ \tau^{\frac{N-3}{2}}}{(1+\tau)^{\frac{N}{2}-1-s}(\frac{t^2}{1+t^2}+\tau)^{\frac{N+2s}{2}}}\ d\tau\\
&=(f^{-1}(f(t^2)))^{\frac{1-N}{2}}f(t^2)^{\frac{N+2s}{2}}\int_0^{\infty}\frac{ \tau^{\frac{N-3}{2}}}{(1+\tau)^{\frac{N}{2}-1-s}(f(t^2)+\tau)^{\frac{N+2s}{2}}}\ d\tau\\
&=\Big(\frac{f(t^2)}{1-f(t^2)}\Big)^{\frac{1-N}{2}}f(t^2)^{\frac{N+2s}{2}}\int_0^{\infty}\frac{ \tau^{\frac{N-3}{2}}}{(1+\tau)^{\frac{N}{2}-1-s}(f(t^2)+\tau)^{\frac{N+2s}{2}}}\ d\tau\\
&=(1-T)^{\frac{N-1}{2}}T^{\frac{1+2s}{2}}\int_0^{\infty}\frac{ \tau^{\frac{N-3}{2}}}{(1+\tau)^{\frac{N}{2}-1-s}(T+\tau)^{\frac{N+2s}{2}}}\ d\tau,
\end{align*}
where we put $T=f(t^2)$ and used that $f^{-1}(a)=\frac{a}{1-a}$ for $a\in[0,1)$. Hence
\begin{align*}
\psi(t)&=(1-T)^{\frac{N-1}{2}}T^{\frac{1+2s}{2}}\int_0^{\infty}\frac{ \tau^{\frac{N-3}{2}}}{(1+\tau)^{\frac{N}{2}-1-s}(T+\tau)^{\frac{N+2s}{2}}}\ d\tau\\
&=(1-T)^{\frac{N-1}{2}}T^{\frac{1+2s}{2}}\int_0^{\infty}\frac{ T^{\frac{N-1}{2}}h^{\frac{N-3}{2}}}{(1+Th)^{\frac{N}{2}-1-s}(T+Th)^{\frac{N+2s}{2}}}\ dh\\
&=(1-T)^{\frac{N-1}{2}}\int_0^{\infty}\frac{h^{\frac{N-3}{2}}}{(1+Th)^{\frac{N}{2}-1-s}(1+h)^{\frac{N+2s}{2}}}\ dh=:\Psi(T),
\end{align*}
that is, $\Psi(T)=\psi(\sqrt{f^{-1}(T)})$ for $T\in(0,\frac{1}{2}]$. Moreover, note that
\begin{equation}
\label{generalbound}
\begin{split}
(1-T)^{\frac{N-1}{2}}\int_0^{\infty}\frac{h^{\frac{N-3}{2}}}{(1+Th)^{\frac{N}{2}-1-s}(1+h)^{\frac{N+2s}{2}}}\ dh&\leq \int_0^{\infty}\frac{h^{\frac{N-3}{2}}}{(1+Th)^{\frac{N}{2}-1}(1+h)^{\frac{N}{2}}}\Big(\frac{1+Th}{1+h}\Big)^{s}\ dh\\
&\leq \int_0^{\infty}\frac{h^{\frac{N-3}{2}}}{(1+h)^{\frac{N}{2}}}\ dh=\frac{\sqrt{\pi}\Gamma(\frac{N-1}{2})}{\Gamma(\frac{N}{2})}=:C_1
\end{split}
\end{equation}
Thus, by dominated convergence and the above, we find
$$
\Psi(0):=\lim_{t\to0}\Psi(T)= \int_0^{\infty}\frac{h^{\frac{N-3}{2}}}{(1+h)^{\frac{N+2s}{2}}}\ dh=\frac{\Gamma(\frac12+s)\Gamma(\frac{N-1}{2})}{\Gamma(\frac{N+2s}{2})}=\psi(0).
$$
Hence, $\Psi:[0,\frac{1}{2}]\to\R$ is continuous and thus it follows that also $\psi$ is continuous. To show the H\"older continuity of $\psi$, we use the representation via $\Psi$ and consider different cases.\\
%
%\noindent \textit{The case $\frac{N}{2}=s+1$:} In this case we have 
%$$
%\Phi(T)=(1-T)^{\frac{N-1}{2}}\int_0^{\infty}\frac{h^{\frac{N-3}{2}}}{(1+h)^{\frac{N}{2}+s}}\ dh=(1-T)^{\frac{N-1}{2}}\Phi(0)
%$$
%and thus, for $t\in[0,1]$
%$$
%\phi(t)=\Phi(\frac{t^2}{1+t^2})=\frac{\Phi(0)}{(1+t^2)^{\frac{N-1}{2}}},
%$$
%so that $\phi$ is Lipschitz continuous in this case.

\noindent\textit{The case $s>\frac12$:} Let $0\leq A<B\leq \frac12$. Then
\begin{align*}
|\Psi(B)-\Psi(A)|&\leq (1-B)^{\frac{N-1}{2}}\int_0^{\infty}\frac{h^{\frac{N-3}{2}}}{(1+h)^{\frac{N+2s}{2}}}\Bigg|\frac{1}{(1+Bh)^{\frac{N}{2}-1-s}}-\frac{1}{(1+Ah)^{\frac{N}{2}-1-s}}\Bigg|\ dh\\
&\qquad +\int_0^{\infty}\frac{h^{\frac{N-3}{2}}}{(1+ah)^{\frac{N}{2}-1-s}(1+h)^{\frac{N+2s}{2}}}\ dh\Bigg|(1-B)^{\frac{N-1}{2}}-(1-A)^{\frac{N-1}{2}}\Bigg|\\
&\leq |\frac{N}{2}-1-s| \int_0^{\infty}\frac{h^{\frac{N-1}{2}}}{(1+h)^{\frac{N+2s}{2}}}\sup_{x\in[A,B]}\frac{1}{(1+xh)^{\frac{N}{2}-s}}\ dh|A-B|\\
&\qquad +\frac{N-1}{2}\int_0^{\infty}\frac{h^{\frac{N-3}{2}}}{(1+ah)^{\frac{N}{2}-1-s}(1+h)^{\frac{N+2s}{2}}}\ dh\sup_{x\in[A,B]}(1-x)^{\frac{N-3}{2}}|A-B|.
\end{align*}
Clearly, using \eqref{generalbound}, 
$$
\frac{N-1}{2}\int_0^{\infty}\frac{h^{\frac{N-3}{2}}}{(1+Ah)^{\frac{N}{2}-1-s}(1+h)^{\frac{N+2s}{2}}}\ dh\sup_{x\in[A,B]}(1-x)^{\frac{N-3}{2}}\leq \frac{\sqrt{\pi}\Gamma(\frac{N+1}{2}) }{\Gamma(\frac{N}{2})}=C_1.
$$
Moreover, if $\frac{N}{2}\geq s$ we have $\sup_{x\in[A,B]}\frac{1}{(1+xh)^{\frac{N}{2}-s}}=1$ and thus also
\begin{align*}
|\frac{N}{2}-1-s| &\int_0^{\infty}\frac{h^{\frac{N-1}{2}}}{(1+h)^{\frac{N+2s}{2}}}\sup_{x\in[A,B]}\frac{1}{(1+xh)^{\frac{N}{2}-s}}\ dh\\
&\leq|\frac{N}{2}-1-s|\int_0^{\infty}\frac{h^{\frac{N-1}{2}}}{(1+h)^{\frac{N}{2}+s}}\ dh=|\frac{N}{2}-1-s|\frac{\Gamma(s-\frac{1}{2})\Gamma(\frac{N+1}{2})}{\Gamma(\frac{N+2s}{2})}=:C_{2}.
\end{align*}
Hence, $\Psi$ is Lipschitz continuous. Thus, for $0\leq a<b\leq 1$ we find with $C_3=\max\{C_1,C_2\}$
\begin{align*}
|\psi(a)-\psi(b)|&=\Big|\Psi(\frac{a^2}{1+a^2})-\Psi(\frac{b^2}{1+b^2})\Big|\leq C_3\Big|\frac{a^2}{1+a^2}-\frac{b^2}{1+b^2}\Big|\\
&\leq C_3\sup_{x\in[a^2,b^2]}|\frac{2x}{(1+x^2)^2}||a^2-b^2|\leq C_3|a-b|,
\end{align*}
using that $a,b\leq 1$. Hence, $\psi$ is Lipschitz continuous in this case.\\

\noindent\textit{The case $s<\frac12$:} We use the inequality
%\footnote{Let $0<x<y$. Then with H\"older's inequality for some $1<p,q<\infty$ to be chosen we find
%\begin{align*}
%\Big|x^{s+1-\frac{N}{2}}-y^{s+1-\frac{N}{2}}\Big|&=\Bigg|(\frac{N}{2}-1-s)\int_x^y t^{s-\frac{N}{2}}\ dt\Bigg|\leq \Big|\frac{N}{2}-1-s\Big||x-y|^{\frac{1}{p}}\Big(\int_{x}^y t^{q(s-\frac{N}{2})}\ dt\Big)^{\frac{1}{q}}\\
%&\leq \frac{|\frac{N}{2}-1-s|}{(q(\frac{N}{2}-s)-1)^{\frac{1}{q}}}|x-y|^{\frac{1}{p}}\max\Big\{x^{s-\frac{N}{2}+\frac{1}{q}},y^{s-\frac{N}{2}+\frac{1}{q}}\Big\}.
%\end{align*}
%With $\frac{1}{p}=2s$ it follows that $\frac{1}{q}=1-2s$ and thus
%$$
%\Big|x^{s+1-\frac{N}{2}}-y^{s+1-\frac{N}{2}}\Big|\leq c_{s,N}|x-y|^{2s}\Big(x^{1-s-\frac{N}{2}}+y^{1-s-\frac{N}{2}}\Big)\quad\text{for }\ x,y>0,
%$$
%where $c_{s,N}=\frac{|\frac{N}{2}-1-s|}{(\frac{N}{2}+s-1)^{1-2s}}(1-2s)^{1-2s}$.
%}
$$
|x^{2s-N}-y^{2s-N}|\leq c|x-y|^{2s}\Big(x^{-2-N}+y^{-2-N}\Big)\quad \text{for }\ x,y>0
$$
for a constant $c=c_{s,N}>0$.
Let $0\leq A<B\leq \frac12$. Proceeding as in the previous case, we find with the above inequality
\begin{align*}
&|\Psi(B)-\Psi(A)|\leq \int_0^{\infty}\frac{h^{\frac{N-3}{2}}}{(1+h)^{\frac{N+2s}{2}}}\Bigg| (1+Bh)^{s+1-\frac{N}{2}}-(1+Ah)^{s+1-\frac{N}{2}}\Bigg|\ dh+C_1|A-B|\\
&\leq C_1|A-B|+c|A-B|^{2s}\int_0^{\infty}\frac{h^{\frac{N-3}{2}+2s}}{(1+h)^{\frac{N+2s}{2}}}\max\{(1+Ah)^{1-s-\frac{N}{2}},(1+Bh)^{1-s-\frac{N}{2}}\}\ dh\\
&\leq C_1|A-B|+c|A-B|^{2s}\int_0^{\infty}\frac{h^{\frac{N-3}{2}+2s}}{(1+h)^{\frac{N+2s}{2}}}\ dh\\
&=\underbrace{\Big(C_1+c\frac{\Gamma(\frac12-s)\Gamma(\frac{N-1}{2}+2s)}{\Gamma(\frac{N+2s}{2})}\Big)}_{=:C_4}|A-B|^{2s}.
\end{align*}
Similarly as in the previous case we find
$$
|\psi(a)-\psi(b)|=\Big|\Psi(\frac{a^2}{1+a^2})-\Psi(\frac{b^2}{1+b^2})|\Big|\leq C_4|a^2-b^2|^{2s}\leq C_4|a-b|^{2s}.
$$
This shows the case for $s<\frac12$.\\

\noindent\textit{The case $s=\frac12$:} Let $\sigma\in(0,1)$. Then we have for $0<x<y$ by H\"older's inequality with $\frac{1}{p}=\sigma$, $\frac{1}{q}=1-\sigma$
\begin{align*}
\Big|x^{\frac{3-N}{2}}-y^{\frac{3-N}{2}}\Big|&=\Bigg|\frac{3-N}{2}\int_x^y t^{\frac{1-N}{2}}\ dt\Bigg|\leq \Big|\frac{3-N}{2}\Big||x-y|^{\sigma}\Big(\int_{x}^y t^{q(\frac{1-N}{2})}\ dt\Big)^{1-\sigma}\\
&\leq \frac{|N-3|}{2(q(\frac{N-1}{2})+1)^{1-\sigma}}|x-y|^{\sigma}\max\Big\{x^{\frac{3-N}{2}-\sigma},y^{\frac{3-N}{2}-\sigma}\Big\}\\
&= \underbrace{\frac{|N-3|}{2(\frac{N+1}{2}-\sigma)^{1-\sigma}}(1-\sigma)^{1-\sigma}}_{=:C_5}|x-y|^{\sigma}\max\Big\{x^{\frac{3-N}{2}-\sigma},y^{\frac{3-N}{2}-\sigma}\Big\}.
\end{align*}
Then we find similar to the previous case for $0\leq A<B\leq \frac12$
\begin{align*}
|\Psi(B)-\Psi(A)|&\leq \int_0^{\infty}\frac{h^{\frac{N-3}{2}}}{(1+h)^{\frac{N+1}{2}}}\Bigg| (1+Bh)^{\frac{3-N}{2}}-(1+Ah)^{\frac{3-N}{2}}\Bigg|\ dh+C_1|A-B|\\
&\leq C_1|A-B|+C_5|A-B|^{\sigma}\int_0^{\infty}\frac{h^{\frac{N-3}{2}+\sigma}}{(1+h)^{\frac{N+1}{2}}}\max\{(1+Ah)^{\frac{3-N}{2}-\sigma},(1+Bh)^{\frac{3-N}{2}-\sigma}\}\ dh\\
&\leq C_1|A-B|+C_5|A-B|^{\sigma}\int_0^{\infty}\frac{h^{\frac{N-3}{2}+\sigma}}{(1+h)^{\frac{N+1}{2}}}\ dh\\
&=\Big(C_1+C_5\frac{\Gamma(1-\sigma)\Gamma(\frac{N-1}{2}+\sigma)}{\Gamma(\frac{N+1}{2})}\Big)|A-B|^{\sigma}.
\end{align*}
As before we conclude that $\psi \in C^{0,\sigma}([0,1])$. This finishes the proof.
\end{proof}
As a corollary we have the following 
\begin{corollary}\label{eq-Holder-reg-2}
Let $N\geq 2$ and $R>1$. Define $$F_R^{\pm}:[-2,2]\times[0,2]\to\R_+,\;\;\;\;(t,r)\mapsto F_R^{\pm}(t,r)=\left\{\begin{aligned} &\int_{\frac{\sqrt{(t+R)(t\pm r+R)}}{r}(\mathbb{S}^{N-1}-e_1)}\frac{d\theta}{(1+|\theta|^2)^{\frac{N+2s}{2}}}, && \forall\,r\neq 0;\\
&\frac{\pi^{\frac{N-1}{2}}\G(\frac{1+2s}{2})}{\G(\frac{N+2s}{2})},&&\text{for}\quad r=0.\end{aligned}\right.
$$
%Assume we are in one of the cases
%\begin{itemize}
%\item[(i)] $N\geq 3$ and $s\in(0,1)$;
%\item[(ii)]$N=2$ and $s>1/2$.
%\end{itemize}
Then, we have 
\begin{equation}\label{eq-new-def-of-F_R}
    F_R^{\pm}(r,t)=\frac{\pi^{\frac{N-1}{2}}}{\G(\frac{N-1}{2})}\psi\Big(\frac{r}{2\sqrt{(t+R)(t\pm r+R)}}\Big)\qquad\forall\,\,t\in[-2,2],\,\forall\,\,r\in[0,2],
\end{equation}
where $\psi$ is defined as in Lemma \ref{eq-first-lemma} above. Consequently, there exists $C,\delta>0$ (independent of $R$) so that 
\begin{equation}\label{eq-Holder-regularity-of-F_R}
|F_R^{\pm}(t,r)-F_R^{\pm}(t',r')|\leq C(|r-r'|^{s+\d}+|t-t'|^{s+\d}),\quad\forall\,\,r,r'\in[0,2]\quad\text{and}\quad\forall\,\,t,t'\in[-2,+2].
\end{equation}
\end{corollary}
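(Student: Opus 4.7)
The plan is to derive the identity \eqref{eq-new-def-of-F_R} by an explicit change of variables on the spherical integral defining $F_R^{\pm}$, and then to obtain \eqref{eq-Holder-regularity-of-F_R} for free by composing the $C^{0,s+\d}$ function $\psi$ supplied by Lemma \ref{eq-first-lemma} with a uniformly Lipschitz inner map. No new integral identities beyond those of Lemma \ref{eq-first-lemma} should be needed.

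For the first part, set $\alpha = \alpha_R^{\pm}(t,r):=\frac{\sqrt{(t+R)(t\pm r+R)}}{r}$ for $r>0$ and write points of the shifted sphere $\alpha(\mathbb{S}^{N-1}-e_1)$ as $\theta = \alpha(\omega-e_1)$ with $\omega\in\mathbb{S}^{N-1}$. Then $d\theta = \alpha^{N-1}\,d\sigma(\omega)$ and $|\theta|^2 = 2\alpha^2(1-\omega_1)$. Using the standard spherical slicing $d\sigma(\omega) = \omega_{N-2}(1-\omega_1^2)^{(N-3)/2}\,d\omega_1$ on $\omega_1\in[-1,1]$, with $\omega_{N-2} = \frac{2\pi^{(N-1)/2}}{\G(\frac{N-1}{2})}$ the surface area of $\mathbb{S}^{N-2}$, and then substituting $h=(1-\omega_1)/2\in[0,1]$, the integral collapses to
$$
F_R^{\pm}(t,r) = \omega_{N-2}\,2^{N-2}\,\alpha^{N-1}\int_0^1\frac{(h(1-h))^{(N-3)/2}}{(1+4\alpha^2 h)^{(N+2s)/2}}\,dh.
$$
Factoring $(4\alpha^2)^{(N+2s)/2}$ out of the denominator and letting $\t:=\frac{1}{2\alpha}=\frac{r}{2\sqrt{(t+R)(t\pm r+R)}}$, the remaining $h$-integral is exactly $\t^{-(1+2s)}\psi(\t)$ up to the power-of-$\alpha$ prefactor, and everything collapses to $\frac{\pi^{(N-1)/2}}{\G(\frac{N-1}{2})}\psi(\t)$. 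The value at $r=0$ follows by continuity: $\t\to 0$, and $\frac{\pi^{(N-1)/2}}{\G(\frac{N-1}{2})}\psi(0)=\frac{\pi^{(N-1)/2}\G(\frac{1+2s}{2})}{\G(\frac{N+2s}{2})}$ using $\G(\tfrac12+s)=\G(\tfrac{1+2s}{2})$, matching the prescribed value in the definition of $F_R^{\pm}$.

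For the Hölder bound, it only remains to show that $g_R^{\pm}(t,r):=\frac{r}{2\sqrt{(t+R)(t\pm r+R)}}$ is Lipschitz on $[-2,2]\times[0,2]$ with a constant independent of $R$ (for $R$ large enough that $(t+R)$ and $(t\pm r+R)$ are bounded below by a positive constant on this set, which is the relevant regime). A direct inspection of the partial derivatives shows they are $O(1/R)$ and $O(1/\sqrt R)$, so in particular uniformly bounded in $R$. Since $\psi\in C^{0,s+\d}([0,1])$ by Lemma \ref{eq-first-lemma}, the composition $F_R^{\pm}=\frac{\pi^{(N-1)/2}}{\G(\frac{N-1}{2})}\psi\circ g_R^{\pm}$ is then $C^{0,s+\d}$ with constant independent of $R$, yielding \eqref{eq-Holder-regularity-of-F_R}. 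The only real point to watch is the uniformity in $R$ of the Lipschitz constant for $g_R^{\pm}$; the change-of-variables step itself is just bookkeeping and reduces to one application of each of the two standard integral identities recalled at the start of Section \ref{sec-2}.
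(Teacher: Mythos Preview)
Your argument is correct and follows essentially the same route as the paper: reduce the spherical integral to a one-dimensional integral via the slicing $d\sigma(\omega)=\omega_{N-2}(1-\omega_1^2)^{(N-3)/2}\,d\omega_1$ (the paper writes out full spherical coordinates for $N\ge 3$ and treats $N=2$ separately, while your slicing formula handles both at once), then identify the result with $\psi$ after the substitution $h=(1-\omega_1)/2$, and finally deduce the H\"older bound by composing $\psi\in C^{0,s+\delta}$ with the uniformly Lipschitz inner map $g_R^{\pm}$. The paper merely asserts that \eqref{eq-Holder-regularity-of-F_R} is ``a simple consequence'' of \eqref{eq-new-def-of-F_R} and Lemma~\ref{eq-first-lemma}; your explicit observation that $\partial_t g_R^{\pm},\,\partial_r g_R^{\pm}$ are $O(R^{-1/2})$ is the content behind that assertion and is what gives the uniformity in $R$. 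One small quibble: only one integral identity (the Beta integral) is recalled at the start of Section~\ref{sec-2}, and it is used in Lemma~\ref{eq-first-lemma} rather than in this computation, so your closing sentence overstates its role here.
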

\begin{proof}
First, note that the estimate \eqref{eq-Holder-regularity-of-F_R} is a simple consequence of the identity \eqref{eq-new-def-of-F_R} and Lemma \ref{eq-first-lemma} above. Therefore, the proof of the Corollary reduces to the proof of the identity \eqref{eq-new-def-of-F_R}. By definition of $\psi$, we immediately check \eqref{eq-new-def-of-F_R} for $r=0$. For $r\neq 0$, we let 
$$
\t_R(t,r):=\frac{\sqrt{(t+R)(t\pm r+R)}}{r}.
$$
Since the integrand is invariant under rotation, by changing variables $\ov y = y+\t_R(t,r) e_N$ we get 
\begin{align*}
F_R^{\pm}(t,r)&=\int_{\t_R(t,r)(\mathbb{S}^{N-1}-e_N)}\frac{dy}{(1+|y|^2)^{\frac{N+2s}{2}}}= \int_{\t_R(t,r)\mathbb{S}^{N-1}}\frac{dy}{(1+|y-\t_R(t,r) e_N|^2)^{\frac{N+2s}{2}}}\\
&=\int_{\t_R(t,r)\mathbb{S}^{N-1}}\frac{dy}{(1+|y|^2-2\t_R(t,r) y\cdot e_N+\t_R^2(t,r))^{\frac{N+2s}{2}}}\\
&=\int_{\t_R(t,r)\mathbb{S}^{N-1}}\frac{dy}{(1+2\t_R^2(t,r)-2\t_R(t,r)y\cdot e_N)^{\frac{N+2s}{2}}}.
\end{align*}
We first start with the case $N\geq 3$. Passing into spherical coordinates we get
\begin{align*}
F_R^{\pm}(t,r)&=\int_{(0,\pi)^{N-2}}d\theta_1\cdots d\theta_{N-2}\t_R^{N-1}(t,r)\sin^{N-2}(\theta_1)\sin^{N-3}(\theta_2)\cdots\sin(\theta_{N-2})\\
&\times \int_{0}^{2\pi} \frac{d\varphi}{(1+2\t_R^2(t,r)-2\t_R^2(t,r)\cos\theta_1)^{\frac{N+2s}{2}}}\\
&=2\pi c_N\int_{0}^\pi\frac{\t_R^{N-1}(t,r)\sin^{N-2}(\theta_1)}{(1+2\t_R^2(t,r)-2\t_R^2(t,r)\cos\theta_1)^{\frac{N+2s}{2}}},
\end{align*}
where $c_N=\int_{(0,\pi)^{N-3}}\sin^{N-3}(\theta_2)\cdots\sin(\theta_{N-2})d\theta_2\cdots d\theta_{N-2}=\frac{\pi^{\frac{N-1}{2}}}{\pi\G(\frac{N-1}{2})}$ for $N\geq 3$. Using the change of variables $h=\cos\theta_1$, we obtain
\begin{align*}
F_R^{\pm}(t,r)&=2\pi c_N\int_{-1}^{+1}\frac{\t_R^{N-1}(r,t)(1-h^2)^{\frac{N-2}{2}}}{\Big(1+2\t_R^2(t,r)-2\t_R^2(t,r)h\Big)^{\frac{N+2s}{2}}}\frac{dh}{(1-h^2)^{1/2}}\\
&=2\pi c_N\t_R^{N-1}(t,r)\int_{-1}^{+1}\frac{\Big(1-h^2)^{\frac{N-3}{2}}}{(1+2\t_R^2(t,r)-2\t_R^2(t,r)h\Big)^{\frac{N+2s}{2}}}dh\\
&=2\pi c_N\t_R^{N-1}(t,r)\int_{0}^{2}\frac{h^{\frac{N-3}{2}}(2-h)^{\frac{N-3}{2}}}{(1+2\t_R^2(t,r)h)^{\frac{N+2s}{2}}}dh\\
&=2\pi c_N\t_R^{-1-2s}(t,r)\int_{0}^{2}\frac{h^{\frac{N-3}{2}}(2-h)^{\frac{N-3}{2}}}{\big(\frac{1}{\t_R^2(t,r)}+2h\big)^{\frac{N+2s}{2}}}dh\\
&=\frac{\pi^{\frac{N-1}{2}}}{\G(\frac{N-1}{2})}\big(\frac{1}{2\t_R(t,r)}\big)^{1+2s}\int_{0}^{1}\frac{h^{\frac{N-3}{2}}(1-h)^{\frac{N-3}{2}}}{\Big(\big(\frac{1}{2\t_R(t,r)}\big)^2+h\Big)^{\frac{N+2s}{2}}}dh\\
&=\frac{\pi^{\frac{N-1}{2}}}{\G(\frac{N-1}{2})}\psi\Big(\frac{1}{2\t_R(t,r)}\Big)\qquad\forall\,\,t\in[-2,+2],\,\forall\,\,r\in(0,2].
\end{align*}
%with $\psi$ defined as in Lemma \ref{eq-first-lemma}. By Lemma %\ref{eq-first-lemma} we deduce that for $N\geq 3$ and $s\in(0,1)$, %for all $r,r'\in[0,1]$ and for all $t,t'\in[-2,2]$,
%\begin{align*}
%|F_R(t,r)-F_R(t',r')|&\leq C\Big|\psi\Big(\frac{1}{2\t_R(t,r)}\Big)-\psi\Big(\frac{1}{2\t_R(t,r)}\Big)\Big|\leq C\big|\frac{1}{\t_R(t,r)}-%\frac{1}{\t_R(t',r')}\big|^{s+\d}\\
%
%&\leq C\Big|\frac{r}{\sqrt{(r+R)(r+t+R)}}-\frac{r'}{\sqrt{(r'+R)(r'+t'+R)}}\Big|^{s+\d}\\
%
%&\leq C(|r-r'|^{s+\d}+|t-t'|^{s+\d}),
%\end{align*}
%for some $\d>0$ and $C>0$ independent of $R$. 
The case $N=2$ is treated similarly.
\end{proof}

\section{Uniform estimates of the first and second eigenvalues of the annulus \texorpdfstring{$A_R$}{AR}.}\label{sec-3}

The aim of this section is to obtain a uniform control of the second fractional eigenvalue of the annulus $A_R=\{x\in\R^N: R<|x|<R+1\}$. For that, we let $\l_{1,s}(A_R)$ and $\l_{2,s}(A_R)$ be respectively the first and the second fractional eigenvalue of $A_R$. We start with the following 
\begin{lemma}\label{eq-uniform-estimate-lambda-1-A-R}
There exists a positive constant $C(N,s)>0$ so that 
\be
\l_{1,s}(A_R)\leq C(N,s)\quad\text{for all $R\geq 1$.}
\ee
\end{lemma}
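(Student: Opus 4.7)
The plan is to use a direct domain-monotonicity argument based on the variational characterization of $\lambda_{1,s}$. Recall that
$$
\lambda_{1,s}(\Omega) \;=\; \inf\Bigl\{ \tfrac{\calE_s(u,u)}{\|u\|_{L^2(\Omega)}^2} : u\in\calH^s_0(\Omega),\ u\not\equiv 0\Bigr\},
$$
and that whenever $\Omega_1\subset\Omega_2$, extension by zero gives an embedding $\calH^s_0(\Omega_1)\hookrightarrow\calH^s_0(\Omega_2)$ which preserves both the energy $\calE_s$ and the $L^2$ norm. Taking the infimum over the larger class yields the inequality $\lambda_{1,s}(\Omega_2)\leq\lambda_{1,s}(\Omega_1)$.

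Next I will exhibit, for every $R\geq 1$, a ball of a fixed radius that fits inside $A_R$. Setting $x_R := (R+\tfrac{1}{2})e_1$ and considering $B_{1/4}(x_R)$, for any $y\in B_{1/4}(x_R)$ one has $R+\tfrac14\leq |y|\leq R+\tfrac34$, so $B_{1/4}(x_R)\subset A_R$. By the monotonicity above,
$$
\lambda_{1,s}(A_R)\;\leq\;\lambda_{1,s}\!\bigl(B_{1/4}(x_R)\bigr).
$$

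Finally, since the energy $\calE_s$ and the $L^2$ norm are invariant under translations of $\R^N$, we have $\lambda_{1,s}(B_{1/4}(x_R))=\lambda_{1,s}(B_{1/4}(0))$. Defining
$$
C(N,s) \;:=\; \lambda_{1,s}\bigl(B_{1/4}(0)\bigr),
$$
which is a finite positive constant depending only on $N$ and $s$, we conclude $\lambda_{1,s}(A_R)\leq C(N,s)$ for every $R\geq 1$, as required.

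There is no real obstacle: the statement is a soft estimate whose entire content is that the annulus $A_R$ always contains a ball of a fixed size, and translation invariance removes the $R$-dependence. (A test-function proof using an explicit smooth bump radially supported in $(R,R+1)$ would work equally well, but the monotonicity argument avoids computing the double integral directly.)
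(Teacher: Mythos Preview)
Your argument is correct: domain monotonicity of $\lambda_{1,s}$ together with the trivial observation that $A_R$ contains a translate of $B_{1/4}(0)$ immediately gives the uniform bound. This is a much shorter and more elementary route than the paper's.

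The paper proceeds very differently. It uses the radial test function $\Phi_R(x)=\phi_1(|x|-R)$, where $\phi_1$ is the first Dirichlet eigenfunction of the interval $(0,1)$, and then computes the resulting Rayleigh quotient by passing to polar coordinates. This leads to the introduction of the kernel $K_R(r,\tilde r)$ (see \eqref{eq-def-K-R}) and a somewhat lengthy analysis of its pointwise limit \eqref{eq-uniform-convergence-K-R} and of the ``killing'' contribution \eqref{eq-uniform-estimate-killing-term}. The reason for this detour is not the lemma itself: the kernel $K_R$ and the estimates \eqref{eq-uniform-convergence-K-R}, \eqref{eq-uniform-estimate-killing-term}, \eqref{eq-uniform-estimate-of-K-R} are precisely the tools reused in Lemmas~\ref{lem-3.2} and~\ref{lem-3.3} and throughout Section~\ref{sec-4} to identify the limiting one-dimensional problem. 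So your proof establishes the bare statement cleanly, while the paper's proof is really setting up the machinery needed later; if you adopt your argument for Lemma~\ref{eq-uniform-estimate-lambda-1-A-R}, those kernel computations would still have to be carried out somewhere before Lemma~\ref{lem-3.2}.
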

\begin{proof}
 Let $\l_{1,s}(0,1)$ be the first eigenvalue of the interval $(0,1)$ and $\phi_1$ be the corresponding (normalized) eigenfunction. Consider $\Phi_R: x\mapsto\phi_1(|x|-R)$. It is clear that $\Phi_R=0$ in $\R^N\setminus A_R$. Moreover
\begin{align}
&R^{-(N-1)}\int_{\R^{2N}}\frac{(\Phi_R(x)-\Phi_R(y))^2}{|x-y|^{N+2s}}dxdy\nonumber\\
&=\int_{0}^\infty \int_{0}^\infty \int_{\mathbb{S}^{N-1}\times \mathbb{S}^{N-1}}\frac{r^{N-1} \tilde r^{N-1}}{R^{N-1}}\frac{\big(\phi_1(r-R)-\phi_1(\tilde r-R)\big)^2}{|r\theta-\tilde r\tilde\theta|^{N+2s}}d\theta d\tilde\theta drd\tilde r\nonumber\\
&=\int_{-R}^\infty \int_{-R}^\infty\frac{\big[(r+R)(\tilde r+R)\big]^{N-1}}{R^{N-1}} \big(\phi_1(r)-\phi_1(\tilde r)\big)^2\int_{\mathbb{S}^{N-1}}d\theta\int_{\mathbb{S}^{N-1}}\frac{d\tilde \theta}{|(r+R)\theta-(\tilde r+R)\tilde\theta|^{N+2s}} drd\tilde r\nonumber\\
&=\o_N\int_{-R}^\infty \int_{-R}^\infty\frac{\big[(r+R)(\tilde r+R)\big]^{N-1} }{R^{N-1}}\big(\phi_1(r)-\phi_1(\tilde r)\big)^2\int_{\mathbb{S}^{N-1}}\frac{d\tilde \theta}{|(r+R) e_1-(\tilde r+R)\tilde\theta|^{N+2s}} drd\tilde r.\label{eq-ar}
\end{align}
Here, $\o_N$ denotes the volume of the $(N-1)$-dimensional unit sphere $\mathbb{S}^{N-1}$. On the other hand, we have 
\begin{align}
&\int_{\mathbb{S}^{N-1}}\frac{d\theta}{|(r+R)e_1-(\tilde r+R)\theta|^{N+2s}} = \int_{\mathbb{S}^{N-1}}\frac{d\theta}{\Big((r+R)^2+(\tilde r+R)^2-2(r+R)(\tilde r+R)e_1\cdot\theta\Big)^{\frac{N+2s}{2}}}\notag\\
&=\int_{\mathbb{S}^{N-1}}\frac{d\theta}{\Big((r-\tilde r)^2+2(r+R)(\tilde r+R)(1-e_1\cdot\theta)\Big)^{(N+2s)/2}}\notag\\
&=\int_{\mathbb{S}^{N-1}}\frac{d\theta}{\Big((r-\tilde r)^2+(r+R)(\tilde r+R)|e_1-\theta|^2\Big)^{\frac{N+2s}{2}}}\notag\\
&=\frac{1}{|r-\tilde r|^{N+2s}}\int_{\mathbb{S}^{N-1}-e_1}\frac{d\theta}{\Big(1+\frac{(r+R)(\tilde r+R)}{|r-\tilde r|^2}|\theta|^2\Big)^{\frac{N+2s}{2}}}\notag\\
&=\frac{|r-\tilde r|^{-1-2s}}{(r+R)^{\frac{N-1}{2}}(\tilde r+R)^{\frac{N-1}{2}}}\int_{\frac{\sqrt{(r+R)(\tilde r+R)}}{|r-\tilde r|}(\mathbb{S}^{N-1}-e_1)}\frac{d\theta}{(1+|\theta|^2)^{\frac{N+2s}{2}}}.\label{asbasf}
\end{align}
In the following define
\be\label{eq-def-K-R}
K_R(r,\tilde r):=\frac{(r+R)^{\frac{N-1}{2}}(\tilde r+R)^{\frac{N-1}{2}}}{R^{N-1}}\int_{\frac{\sqrt{(r+R)(\tilde r+R)}}{|r-\tilde r|}(\mathbb{S}^{N-1}-e_1)}\frac{d\theta}{(1+|\theta|^2)^{\frac{N+2s}{2}}}.
\ee
Plugging \eqref{asbasf} into \eqref{eq-ar} gives 
\begin{align}
&R^{-(N-1)}\int_{\R^{2N}}\frac{(\Phi_R(x)-\Phi_R(y))^2}{|x-y|^{N+2s}}dxdy\nonumber\\
&=\o_N\int_{-R}^\infty \int_{-R}^\infty \frac{\big(\phi_1(r)-\phi_1(\tilde r)\big)^2}{|r-\tilde r|^{1+2s}}\frac{(r+R)^{\frac{N-1}{2}}(\tilde r+R)^{\frac{N-1}{2}}}{R^{N-1}}\int_{\frac{\sqrt{(r+R)(\tilde r+R)}}{|r-\tilde r|}(\mathbb{S}^{N-1}-e_1)}\frac{d\theta}{(1+|\theta|^2)^{\frac{N+2s}{2}}}\nonumber\\
&=\o_N\int_{-2}^2 \int_{-2}^2\frac{\big(\phi_1(r)-\phi_1(\tilde r)\big)^2}{|r-\tilde r|^{1+2s}}K_R(r,\tilde r)drd\tilde r+2\o_N\int_{0}^1\phi_1^2(r)\int_{(-R,\infty)\setminus(-2,2)}\frac{K_R(r,\tilde r)}{|r-\tilde r|^{1+2s}}d\tilde r.\label{eq-norm-of-rescaled-eigenfunctions}
\end{align}
By identity \eqref{eq-new-def-of-F_R}, we have 
\be\label{eq-uniform-convergence-K-R}
K_R(r,\tilde r)=\frac{(r+R)^{\frac{N-1}{2}}(\tilde r+R)^{\frac{N-1}{2}}}{R^{N-1}}\frac{\pi^{\frac{N-1}{2}}}{\G(\frac{N-1}{2})}\psi\Big(\frac{|r-\tilde r|}{2\sqrt{(r+R)(\tilde r+R)}}\Big)\rightarrow \frac{\pi^{\frac{N-1}{2}}\G(\frac{1+2s}{2})}{\G(\frac{N+2s}{2})}>1\quad\text{as}\quad R\rightarrow\infty
\ee
for all $r,\tilde r\in(-a,a)$, for any fixed $a>0$, where $\psi$ is defined as in Lemma \ref{eq-first-lemma} (see also \cite[Lemma 5.1]{CT} for a different proof). Hence, the first integral in \eqref{eq-norm-of-rescaled-eigenfunctions} is comparable to $\int_{-2}^2 \int_{-2}^2\frac{\big(\phi_1(r)-\phi_1(\tilde r)\big)^2}{|r-\tilde r|^{1+2s}}drd\tilde r$\;\, for $R$ sufficiently large. In other words,
\begin{align}\label{eq-x-1.8}
C_1(N,s)[\phi_1]^2_{H^s(-2,2)}\leq \int_{-2}^2 \int_{-2}^2\frac{\big(\phi_1(r)-\phi_1(\tilde r)\big)^2}{|r-\tilde r|^{1+2s}}K_R(r,\tilde r)drd\tilde r&\leq C_2(N,s)[\phi_1]^2_{H^s(-2,2)}
\end{align}
for $R$ sufficiently large. 
\newpage

To estimate the second integral in \eqref{eq-norm-of-rescaled-eigenfunctions}, we write 
\begin{align*}
\int_{(-R,\infty)\setminus(-2,2)}\frac{K_R(r,\tilde r)}{|r-\tilde r|^{1+2s}}d\tilde r = \int_{-R}^{-2}\cdots d\tilde r+\int_{2}^\infty\cdots d\tilde r
\end{align*}
Using the new variable $\ov r=\frac{(r+R)(R-\tilde r)}{(r+\tilde r)}$, we may write 
\begin{align*}
\int_{\frac{\sqrt{(r+R)(R-\tilde r)}}{(r+\tilde r)}(\mathbb{S}^{N-1}-e_1)}&\frac{d\theta}{(1+|\theta|^2)^{\frac{N+2s}{2}}}
=\frac{(r+R)^{\frac{N-1}{2}}(R-\tilde r)^{\frac{N-1}{2}}}{(r+\tilde r)^{N-1}}\int_{\mathbb{S}^{N-1}-e_1}\frac{d\theta}{(1+\frac{(r+R)(R-\tilde r)}{(r+\tilde r )^2}|\theta|^2)^{\frac{N+2s}{2}}}\\
&=(r+R)^{-(N-1)}\ov r^{\frac{N-1}{2}}(\ov r+r+R)^{\frac{N-1}{2}}\int_{\mathbb{S}^{N-1}-e_1}\frac{d\theta}{\Big(1+\Big|\frac{\sqrt{\ov r(\ov r+r+R)}}{R+r}\theta\Big|^2\Big)^{\frac{N+2s}{2}}}=f\big(\frac{\ov r}{r+R}\big),
\end{align*}
where we set
\be\label{eq-def-ov-f}
f(\rho)=\int_{\sqrt{\rho(\rho+1)}(\mathbb{S}^{N-1}-e_1)}\frac{d\theta}{(1+|\theta|^2)^{\frac{N+2s}{2}}}.
\ee
We also have 
\begin{align*}
 &\int_{-R}^{-2}\cdots d\tilde r=\frac{(r+R)^{\frac{N-1}{2}}}{R^{N-1}}\int_{-R}^{-2}(\tilde r+R)^{\frac{N-1}{2}}\frac{1}{|r-\tilde r|^{1+2s}}\int_{\frac{\sqrt{(r+R)(\tilde r+R)}}{|r-\tilde r|}(\mathbb{S}^{N-1}-e_1)}\frac{d\theta}{(1+|\theta|^2)^{\frac{N+2s}{2}}}d\tilde r\\
 &=\frac{(r+R)^{\frac{N-1}{2}}}{R^{N-1}}\int_{2}^{R}(R-\tilde r)^{\frac{N-1}{2}}\frac{1}{(r+\tilde r)^{1+2s}}\int_{\frac{\sqrt{(r+R)(R-\tilde r)}}{(r+\tilde r)}(\mathbb{S}^{N-1}-e_1)}\frac{d\theta}{(1+|\theta|^2)^{\frac{N+2s}{2}}}d\tilde r\\
 &=\frac{(r+R)^{\frac{N-1}{2}}}{R^{N-1}}\int_{0}^{\frac{(R+r)(R-2)}{r+2}}(R+r)^{-2s}(1+\frac{\ov r}{r+R})^{2s-1}\ov r^{\frac{N-1}{2}}(1+\frac{\ov r}{r+R})^{-\frac{N-1}{2}} f\big(\frac{\ov r}{r+R}\big)\frac{d\ov r}{r+R}\\
 &=\frac{(r+R)^{N-1}}{R^{N-1}}(R+r)^{-2s}\int_{0}^{\frac{R-2}{r+2}}(1+\rho)^{2s-1}\Big(\frac{\rho}{1+\rho}\Big)^{\frac{N-1}{2}}f(\rho)d\rho\\
 &\leq \frac{(r+R)^{N-1}}{R^{N-1}}(R+r)^{-2s}\int_{0}^{\frac{R}{2}}(1+\rho)^{2s-1}\int_{\sqrt{\rho(\rho+1)}(\mathbb{S}^{N-1}-e_1)}\frac{d\theta}{(1+|\theta|^2)^{\frac{N+2s}{2}}}d\rho.
\end{align*}
Now since 
\[
 f(\rho)=\int_{\sqrt{\rho(\rho+1)}(\mathbb{S}^{N-1}-e_1)}\frac{d\theta}{(1+|\theta|^2)^{\frac{N+2s}{2}}}=\frac{\pi^{\frac{N-1}{2}}}{\Gamma(\frac{N-1}{2})}\Psi(\frac{1}{2\sqrt{\rho(\rho+1)}})\rightarrow \frac{\pi^{\frac{N-1}{2}}\G(\frac{1+2s}{2})}{\G(\frac{N+2s}{2})}\quad\text{as}\quad\rho\to\infty,
\] 
there exists $m>0$ large enough so that $f(\rho)\leq C(N,s,m)$ for $\rho\geq m$. From this, the continuity of $\rho\mapsto f(\rho)$, and the estimate above we get 
\begin{align}\label{eq-uniform-estimate-killing-term-1}
 &\int_{-R}^{-2}\cdots d\tilde r\leq\frac{(r+R)^{N-1}}{R^{N-1}}(R+r)^{-2s}\int_{0}^{\frac{R}{2}}(1+\rho)^{2s-1}\int_{\sqrt{\rho(\rho+1)}(\mathbb{S}^{N-1}-e_1)}\frac{d\theta}{(1+|\theta|^2)^{\frac{N+2s}{2}}}d\rho\nonumber\\
&\leq 2^{N-1}R^{-2s}\int_{0}^m(1+\rho)^{2s-1}\ov f(\rho)d\rho+C(N,s,m)\int_{m}^{\frac{R}{2}}(1+\rho)^{2s-1}d\rho\nonumber\\
&\leq C(N,s,m)R^{-2s}(1+R^{2s})\leq C(N,s).
\end{align}
Similarly, by the change of variable $\ov r=\frac{(r+R)(\tilde r+R)}{\tilde r-r}$ we get
\begin{align*}
&\int_{2}^\infty\cdots d\tilde r=\int_{2}^\infty\frac{K_R(r,\tilde r)}{(\tilde r-r)^{1+2s}}d\tilde r\\
&=\frac{(r+R)^{\frac{N-1}{2}}}{R^{N-1}}\int_{2}^\infty (\tilde r+R)^{\frac{N-1}{2}}\frac{1}{(\tilde r-r)^{1+2s}}\int_{\frac{\sqrt{(r+R)(\tilde r+R)}}{(\tilde r-r)}(\mathbb{S}^{N-1}-e_1)}\frac{d\theta}{(1+|\theta|^2)^{\frac{N+2s}{2}}}\\
&=\frac{(r+R)^{\frac{N-1}{2}}}{R^{N-1}}\int_{r+R}^{\frac{(r+R)(R+2)}{2-r}}(R+r)^{-2s}(\frac{\ov r}{r+R}-1)^{2s-1}\ov r^{\frac{N-1}{2}}(\frac{\ov r}{r+R}-1)^{-\frac{N-1}{2}} g\big(\frac{\ov r}{r+R}\big)\frac{d\ov r}{r+R}\\
&=\frac{(r+R)^{N-1}}{R^{N-1}}(R+r)^{-2s}\int_{1}^{\frac{R+2}{2-r}}(\rho-1)^{2s-1}\rho^{\frac{N-1}{2}}(\rho-1)^{-\frac{N-1}{2}} g(\rho)d\rho\\
&\leq 2^{N-1}R^{-2s}\int_{1}^{R+2}(\rho-1)^{2s-1}\big(\frac{\rho}{\rho-1}\big)^{\frac{N-1}{2}} g(\rho)d\rho\\
&\leq 2^{N-1}R^{-2s}\int_{0}^{2R}\rho^{2s-1}\big(\frac{\rho+1}{\rho}\big)^{\frac{N-1}{2}} g(\rho+1)d\rho=2^{N-1}\int_{0}^2\rho^{2s-1}\Big[\frac{\rho+1/R}{\rho}\Big]^{\frac{N-1}{2}}g(R\rho+1)d\rho,
\end{align*}
with 
\be\label{eq-def-ov-g}
 g(\rho)=\int_{\sqrt{\rho(\rho-1)}(\mathbb{S}^{N-1}-e_1)}\frac{d\theta}{(1+|\theta|^2)^{\frac{N+2s}{2}}}\quad\text{for all}\quad\rho>1.
\ee
Since \[g(R\rho+1)=\int_{\sqrt{R\rho(R\rho+1)}(\mathbb{S}^{N-1}-e_1)}\frac{d\theta}{(1+|\theta|^2)^{\frac{N+2s}{2}}}\rightarrow \frac{\pi^{\frac{N-1}{2}}\G(\frac{1+2s}{2})}{\G(\frac{N+2s}{2})}\quad\text{uniformly in}\;\; \rho,\] and that $\frac{\rho+1/R}{\rho}$ remain bounded for $R$ large enough, we deduce from above that 
\be\label{eq-uniform-estimate-killing-term-2}
\int_{2}^\infty\frac{K_R(r,\tilde r)}{(\tilde r-r)^{1+2s}}d\tilde r\leq C(N,s)\int_{0}^2\rho^{2s-1}d\rho\leq C(N,s)\qquad\text{as}\ R\to\infty.
\ee
Estimates \eqref{eq-uniform-estimate-killing-term-1} and \eqref{eq-uniform-estimate-killing-term-2} yield
\be\label{eq-uniform-estimate-killing-term}
\int_{(-R,\infty)\setminus(-2,2)}\frac{K_R(r,\tilde r)}{|r-\tilde r|^{1+2s}}d\tilde r\leq C(N,s),\qquad\text{for $R$ sufficiently large.}
\ee
Hence,
\begin{align}\label{eq-uniform-estimate-killing-form}
\int_{0}^1\phi_1^2(r)\int_{(-R,\infty)\setminus(-2,2)}\frac{K_R(r,\tilde r)}{|r-\tilde r|^{1+2s}}d\tilde r\leq C(N,s)\int_{0}^1\phi_1^2(r)dr.
\end{align}
Putting together \eqref{eq-norm-of-rescaled-eigenfunctions}, \eqref{eq-x-1.8}, and \eqref{eq-uniform-estimate-killing-form} we obtain that 
\begin{align}\label{eq-uniform-rescaled-first-eigenfunction}
&R^{-(N-1)}\int_{\R^{2N}}\frac{(\Phi_R(x)-\Phi_R(y))^2}{|x-y|^{N+2s}}dxdy\leq C(N,s)\qquad\text{as}\ R\to\infty.
\end{align} 
Next, since 
\be\label{eq-L^2-limit-of-rescaled-eigenfunction}
R^{-(N-1)}\int_{A_R}\Phi_R^2(x)dx=\o_N\int_{0}^1(1+r/R)^{N-1}\phi_1^2(r)dr\rightarrow \o_N\int_{0}^1\phi_1^2dr=\o_N
\ee
as $R\to\infty$, it follows that
\begin{align*}
\l_{1,s}(A_R) &= \inf_{u\in\cH_0^s(A_R)}\left\lbrace \frac{\frac{b_{N,s}}{2}\int_{\R^{2N}}\frac{(u(x)-u(y))^2}{|x-y|^{N+2s}}dxdy}{\int_{A_R}u^2(x)dx}\right\rbrace \nonumber\\
&\leq \frac{\frac{b_{N,s}}{2}\int_{\R^{2N}}\frac{(\Phi_R(x)-\Phi_R(y))^2}{|x-y|^{N+2s}}dxdy}{\int_{A_R}\Phi_R^2(x)dx}\leq C(N,s)
\end{align*}
as $R\to\infty$ by \eqref{eq-uniform-rescaled-first-eigenfunction} and \eqref{eq-L^2-limit-of-rescaled-eigenfunction}. This shows the claim holds for $R\geq R_0$ for some fixed $R_0\geq 1$. The assertion of the Lemma then easily follows.
\end{proof}
Next, we prove 
\begin{lemma}\label{lem-3.2}
    Let $\phi_2$ be a second fractional eigenfunction of the interval $(0,1)$ and define   
\be\label{eq-def-alpha-R}
\alpha_R=\int_{A_R}\phi_{1,A_R}(x)\phi_2(|x|-R)\,dx,
\ee
where $\phi_{1,A_R}$ is the first normalized eigenfunction of $A_R$. Then, we have
\be\label{eq-limit-of-alpha-R}
R^{-\frac{N-1}{2}}\alpha_R\to 0\quad \text{as $R\to\infty$}.
\ee
\end{lemma}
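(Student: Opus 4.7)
The plan is to rewrite $R^{-(N-1)/2}\alpha_R$ as a weighted $L^2((0,1))$-pairing between $\phi_2$ and a rescaled radial profile of $\phi_{1,A_R}$, to identify the limit of that profile along any subsequence as a positive multiple of $\phi_1$, and to conclude via the orthogonality $\int_0^1\phi_1\phi_2\,dt=0$. Using simplicity of $\l_{1,s}(A_R)$, choose $\phi_{1,A_R}$ radial and positive with $\|\phi_{1,A_R}\|_{L^2(A_R)}=1$, write $\phi_{1,A_R}(x)=v_R(|x|-R)$, and set $w_R(t):=R^{(N-1)/2}v_R(t)$. A direct change of variables gives
\begin{equation*}
R^{-\tfrac{N-1}{2}}\alpha_R=\o_N\int_0^1 w_R(t)\phi_2(t)\Big(1+\tfrac{t}{R}\Big)^{N-1}dt,\qquad \o_N\int_0^1 w_R^2\Big(1+\tfrac{t}{R}\Big)^{N-1}dt=1,
\end{equation*}
so $\|w_R\|^2_{L^2((0,1))}\to 1/\o_N$.

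The first key step is the uniform bound $[w_R]_{H^s((0,1))}\leq C$. Applying to $\phi_{1,A_R}$ the radial reduction that led to \eqref{eq-norm-of-rescaled-eigenfunctions} expresses $R^{-(N-1)}\calE_s(\phi_{1,A_R},\phi_{1,A_R})$ as a nonnegative double integral with kernel $K_R$ over $(-R,\infty)^2$. Its $[0,1]^2$-portion dominates $\tfrac{c_{N,s}}{2}[v_R]^2_{H^s((0,1))}=\tfrac{c_{N,s}}{2}R^{-(N-1)}[w_R]^2_{H^s((0,1))}$ for $R$ large, thanks to the uniform lower bound $K_R(r,\tilde r)\geq \tfrac{1}{2}c_{N,s}$ on $[-2,2]^2$ coming from \eqref{eq-uniform-convergence-K-R} and Corollary \ref{eq-Holder-reg-2}, where $c_{N,s}:=\frac{\pi^{(N-1)/2}\G(\tfrac{1+2s}{2})}{\G(\tfrac{N+2s}{2})}$. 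Since $\calE_s(\phi_{1,A_R},\phi_{1,A_R})=\l_{1,s}(A_R)\leq C$ by Lemma \ref{eq-uniform-estimate-lambda-1-A-R}, the bound follows. The compact embedding $\calH^s_0((0,1))\embed L^2((0,1))$ then yields a subsequence with $w_R\rightharpoonup w_\infty$ in $\calH^s_0((0,1))$ and $w_R\to w_\infty$ in $L^2((0,1))$, with $w_\infty\geq 0$ and $\o_N\|w_\infty\|^2_{L^2((0,1))}=1$.

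The second step is to identify $w_\infty$ as a nonnegative eigenfunction of a positive multiple of $(-\D)^s$ on $(0,1)$. Testing the equation for $\phi_{1,A_R}$ against the radial extension $\tilde\eta_R(x):=\eta(|x|-R)$ of $\eta\in C^\infty_c((0,1))$ and dividing by $R^{(N-1)/2}$, the right-hand side converges to $\l_\infty\o_N\int_0^1 w_\infty\eta\,dt$ along a further subsequence where $\l_{1,s}(A_R)\to\l_\infty$. For the left-hand side, the same radial reduction produces a double integral over $(-R,\infty)^2$ with kernel $K_R$; combining the uniform pointwise convergence $K_R\to c_{N,s}$ on compact sets with the uniform-in-$R$ bounds \eqref{eq-uniform-estimate-killing-term-1}--\eqref{eq-uniform-estimate-killing-term-2} on the far-field contributions (which give dominated convergence in the cross-terms where $r\in(0,1)$ and $\tilde r\notin(0,1)$), together with strong $L^2$ convergence of $w_R$, one passes to the limit to obtain
\begin{equation*}
c_{N,s}\,\tfrac{b_{N,s}}{2}\int_{\R^2}\frac{(w_\infty(r)-w_\infty(\tilde r))(\eta(r)-\eta(\tilde r))}{|r-\tilde r|^{1+2s}}\,dr\,d\tilde r=\l_\infty\int_0^1 w_\infty\eta\,dt.
\end{equation*}
Hence $w_\infty$ is a nonnegative nontrivial weak eigenfunction of a rescaled 1D fractional Dirichlet problem on $(0,1)$, so by simplicity of the first eigenvalue it is a positive multiple of $\phi_1$. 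Orthogonality then gives $R^{-(N-1)/2}\alpha_R\to \o_N\int_0^1 w_\infty\phi_2\,dt=0$ along the subsequence, and since the argument applies to any subsequence the full convergence \eqref{eq-limit-of-alpha-R} follows.

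The main technical obstacle is the passage to the limit in the bilinear form: the kernel $K_R$ converges to the constant $c_{N,s}$ uniformly only on compact subsets of $\R^2$, whereas the bilinear form integrates over the unbounded strip $(-R,\infty)^2$. This forces one to combine the compact-set convergence of $K_R$ from Corollary \ref{eq-Holder-reg-2} with the uniform-in-$R$ estimates on the far-field killing contributions established in the proof of Lemma \ref{eq-uniform-estimate-lambda-1-A-R} in order to justify dominated convergence in the killing terms.
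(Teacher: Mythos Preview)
Your proposal is correct and follows essentially the same approach as the paper: rescale the radial profile of $\phi_{1,A_R}$, use the lower bound on $K_R$ from \eqref{eq-uniform-convergence-K-R} together with Lemma~\ref{eq-uniform-estimate-lambda-1-A-R} to get a uniform $H^s$-bound, extract a strongly $L^2$-convergent subsequence, pass to the limit in the weak formulation by splitting into the near part (where $K_R\to c_{N,s}$ uniformly) and the far killing part (controlled by \eqref{eq-uniform-estimate-killing-term-1}--\eqref{eq-uniform-estimate-killing-term-2}), identify the nonnegative limit as $\phi_1$ by simplicity, and conclude via orthogonality. One minor imprecision: from the $(0,1)^2$-portion alone you only bound $[w_R]_{H^s((0,1))}$, not the full $\calH^s_0((0,1))$-norm, so the weak convergence should be stated in $H^s_{\mathrm{loc}}$ (as the paper does) rather than in $\calH^s_0((0,1))$; this is harmless since only strong $L^2$-convergence is actually used.
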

\begin{proof}
Let $\phi_{1,A_R}$ be the first normalized eigenfunction so that 
\begin{align}\label{eq-normalization-condition}
1=\int_{A_R}\phi_{1,A_R}^2(x)dx&=\o_N \int_{0}^1(1+\frac{r}{R})^{N-1}R^{N-1}\phi^2_{1,A_R}(r+R)dr\\
&\geq\o_N\int_{0}^1\Big[R^{\frac{N-1}{2}}\phi_{1,A_R}(\cdot+R)\Big]^2dr.\nonumber
\end{align}
Taking into account Lemma \ref{eq-uniform-estimate-lambda-1-A-R} and   passing into polar coordinates in the identity
$$
\frac{2}{b_{N,s}}\l_{1,s}(A_R)\int_{A_R}\phi_{1,A_R}^2(x)dx=\int_{\R^{2N}}\frac{\big(\phi_{1,A_R}(x)-\phi_{1,A_R}(y)\big)^2}{|x-y|^{N+2s}}dxdy,
$$
we obtain for $R$ sufficiently large 
\begin{align}
&\ov C(N,s)\geq\frac{2}{b_{N,s}}\l_{1,s}(A_R)\int_{0}^1(1+r/R)^{N-1}\Big[R^{\frac{N-1}{2}}\phi_{1,A_R}(\cdot+R)\Big]^2dr\nonumber\\
&=\int_{-R}^\infty\int_{-R}^\infty\frac{\Big(R^{\frac{N-1}{2}}\phi_{1,A_R}(r+R)-R^{\frac{N-1}{2}}\phi_{1,A_R}(\tilde r+R)\Big)^2}{|r-\tilde r|^{1+2s}}K_R(\tilde r,r)drd\tilde r\nonumber\\
&=\int_{-a}^a \int_{-a}^a\frac{\big(R^{\frac{N-1}{2}}\phi_{1,A_R}(r+R)-R^{\frac{N-1}{2}}\phi_{1,A_R}(\tilde r+R)\big)^2}{|r-\tilde r|^{1+2s}}K_R(r,\tilde r)drd\tilde r\nonumber\\
&\qquad +2\int_{0}^1\Big[R^{\frac{N-1}{2}}\phi_{1,A_R}(r)\Big]^2\int_{(-R,\infty)\setminus(-a,a)}\frac{K_R(r,\tilde r)}{|r-\tilde r|^{1+2s}}d\tilde r\nonumber\\
&\geq\int_{-a}^a \int_{-a}^a\frac{\big(R^{\frac{N-1}{2}}\phi_{1,A_R}(r+R)-R^{\frac{N-1}{2}}\phi_{1,A_R}(\tilde r+R)\big)^2}{|r-\tilde r|^{1+2s}}K_R(r,\tilde r)drd\tilde r\nonumber\\
&\geq C(N,s)\Big[R^{\frac{N-1}{2}}\phi_{1,A_R}(\cdot+R)\Big]^2_{H^s(-a,a)}\label{eq-loc-energy-estimate-of-the-rescaled-first-eigenfunction}
\end{align}
for all $a\in(1,\infty)$, where $K_R(\cdot,\cdot)$ is given by \eqref{eq-def-K-R}. In the last line we used that for $R$ sufficiently large we have

\be\label{eq-uniform-estimate-of-K-R}
c(N,s)\geq K_R(r,\tilde r)\geq C(N,s)\qquad\forall\,\,r,\tilde r\in [-a,a],\ \text{for any $a>0$.}
\ee

Hence, we may assume, up to passing to a subsequence still denoted by $R$, that:
\be\label{eq-weak-limit-of-the-rescaled-eigenfunction}
R^{\frac{N-1}{2}}\phi_{1,A_R}(\cdot+R)\rightarrow u_\infty\quad\text{weakly in $H^s_{loc}(\R)$},\qquad
R^{\frac{N-1}{2}}\phi_{1,A_R}(\cdot+R)\rightarrow u_\infty\quad\text{in $L^2(0,1)$}
\ee
and
\be\label{eq-pointwise-limit-res-eigen}
R^{\frac{N-1}{2}}\phi_{1,A_R}(\cdot+R)\rightarrow u_\infty\qquad\text{pointwisely in $(0,1)$.}
\ee
Note that the limiting function $u_{\infty}$ is nontrivial. Indeed, we have
$$
\int_0^1 u_{\infty}^2(r)\ dr=\lim_{R\to\infty}\int_0^1 \Big[R^{\frac{N-1}{2}}\phi_{1,A_r}(r+R)\Big]^2\ dr=\lim_{R\to\infty}\int_{0}^1(1+\frac{r}{R})^{N-1}R^{N-1}\phi^2_{1,A_R}(r+R)dr=\frac{1}{\omega_N}
$$
To see the equation that $u_\infty$ solves, we let $\psi\in C^\infty_c(0,1)$ and define $\psi_R(x) =R^{-\frac{N-1}{2}}\psi(|x|-R)$ so that $\psi_R\in C^\infty_c(A_R)$. Then, we get as above 
\begin{align}
&\frac{2}{b_{N,s}}\l_{1,s}(A_R)\int_{0}^1(1+\frac{r}{R})^{N-1}R^{\frac{N-1}{2}}\phi_{1,A_R}(r+R)\psi(r)dr=\l_{1,s}(A_R)\frac{1}{\o_N}\int_{A_R}\phi_{1,A_R}(x)\psi_R(x)dx\nonumber\\
&=\int_{-2}^2 \int_{-2}^2 \frac{\big(R^{\frac{N-1}{2}}\phi_{1,A_R}(r+r)-R^{\frac{N-1}{2}}\phi_{1,A_R}(\tilde r+R)\big)\big(\psi(r)-\psi(\tilde r)\big)}{|r-\tilde r|^{1+2s}}K_R(r,\tilde r)drd\tilde r\nonumber\\
&+2\int_{0}^1R^{\frac{N-1}{2}}\phi_{1,A_R}(r+R)\psi(r)\int_{(-R,\infty)\setminus(-2,2)}\frac{K_R(r,\tilde r)}{|r-\tilde r|^{1+2s}}d\tilde rdr.\label{eq-splitting-quadratic-form}
\end{align}
By the strong convergence there exists a subsequence, still denoted by $R$, so that 
\be\label{eq-uniform-estimate-res-phi-A_R-2}
|R^{\frac{N-1}{2}}\phi_{1,A_R}(\cdot+R)|\leq h\quad\text{with} \quad h\in L^2(0,1).
\ee
Moreover, for any $r\in(0,1)$, we know from above that 
\begin{align}
\int_{(-R,\infty)\setminus (-2,2)}\frac{K_R(r,\tilde r)}{|r-\tilde r|^{1+2s}}d\tilde r &= \int_{-R}^{-2}\frac{K_R(r,\tilde r)}{|r-\tilde r|^{1+2s}}d\tilde r+\int_{2}^\infty\frac{K_R(r,\tilde r)}{|r-\tilde r|^{1+2s}}d\tilde r\nonumber\\
&=\frac{(r+R)^{N-1}}{R^{N-1}}(R+r)^{-2s}\int_{0}^{\frac{R-2}{r+2}}(1+\rho)^{2s-1}\Big(\frac{\rho}{1+\rho}\Big)^{\frac{N-1}{2}} f(\rho)d\rho\nonumber\\
%R
&\qquad+\frac{(r+R)^{N-1}}{R^{N-1}}(R+r)^{-2s}\int_{1}^{\frac{R+2}{2-r}}(\rho-1)^{2s-1}\rho^{\frac{N-1}{2}}(\rho-1)^{-\frac{N-1}{2}} g(\rho)d\rho\nonumber\\
&\leq C(N,s),\label{eq-uniform-estimate-Killing-form-which-resp-K-R}
\end{align}
where $ f$ and $ g$ are defined in \eqref{eq-def-ov-f} and \eqref{eq-def-ov-g} respectively. Putting together \eqref{eq-uniform-estimate-res-phi-A_R-2} and \eqref{eq-uniform-estimate-Killing-form-which-resp-K-R} gives
\be\label{eq-eq-1}
\Big|R^{\frac{N-1}{2}}\phi_{1,A_R}(r+R)\psi(r)\int_{\R\setminus (0,1)}\frac{K_R(r,\tilde r)}{|r-\tilde r|^{1+2s}}d\tilde r\Big|\leq C(N,s)h(r)\psi(r).
\ee
Moreover,
\be\label{eq-eq-2}
\int_{0}^1 h(r)\psi(r)dr<\infty.
\ee
Consequently, the dominated convergence theorem yields
\begin{align}
&\lim_{R\to\infty}\int_{0}^1R^{\frac{N-1}{2}}\phi_{1,A_R}(r+R)\psi(r)\int_{\R\setminus (-2,2)}\frac{K_R(r,\tilde r)}{|r-\tilde r|^{1+2s}}d\tilde rdr\nonumber\\
&=\int_{0}^1u_\infty(r)\psi(r)\lim_{R\to\infty}\int_{\R\setminus (0,1)}\frac{K_R(r,\tilde r)}{|r-\tilde r|^{1+2s}}d\tilde rdr\nonumber\\
&=\int_{0}^1u_\infty(r)\psi(r)\Big[\lim_{R\to\infty}\big(\frac{r+R}{R}\big)^{N-1}\big(\frac{R-2}{R+r}\big)^{2s}\int_{0}^{\frac{1}{r+2}}(\frac{1}{R-2}+\rho)^{2s-1}\Big(\frac{(R-2)\rho}{1+(R-2)\rho}\Big)^{\frac{N-1}{2}} f((R-2)\rho)d\rho\Big]dr\nonumber\\
&+\int_{0}^1u_\infty(r)\psi(r)\Big[\lim_{R\to\infty}\big(\frac{r+R}{R}\big)^{N-1}\big(\frac{R+2}{R+r}\big)^{2s}\int_{\frac{1}{R+2}}^{\frac{1}{2-r}}\big(\rho-\frac{1}{R+2}\big)^{2s-1}\Big(\frac{(R+2)\rho}{(R+2)\rho-1}\Big)^{\frac{N-1}{2}} g((R+2)\rho)d\rho\Big]dr\nonumber\\
&=\k_{N,s}\int_{0}^1u_\infty(r)\psi(r)\Big(\int_{0}^{\frac{1}{r+2}}\rho^{2s-1}d\rho+\int_{0}^{\frac{1}{2-r}}\rho^{2s-1}d\rho\Big)dr\nonumber\\
&=\k_{N,s}\int_{0}^1u_\infty(r)\psi(r)\frac{1}{2s}\Big[(r+2)^{-2s}+(2-r)^{-2s}\Big]dr\nonumber\\
&=\k_{N,s}\int_{0}^1u_\infty(r)\psi(r)\Big(\int_{-\infty}^{-2}\frac{d\tilde r}{|r-\tilde r|^{1+2s}}+\int_{2}^\infty\frac{d\tilde r}{|r-\tilde r|^{1+2s}}\Big)\nonumber\\
&=\k_{N,s}\int_{0}^1u_\infty(r)\psi(r)\int_{\R\setminus(-2,2)}\frac{d\tilde r}{|r-\tilde r|^{1+2s}}\label{eq-limit-of-modified-Killing-term}
\end{align}
with 
\begin{align*}
\kappa(N,s)=\lim_{R\to\infty} f((R-2)\rho)=\lim_{R\to\infty} g((R+2)\rho)=\frac{\pi^{\frac{N-1}{2}}\G(\frac{1+2s}{2})}{\G(\frac{N+2s}{2})}.
\end{align*}
As for the first integral in \eqref{eq-splitting-quadratic-form}, we write 
\begin{align}
&\int_{-2}^2\int_{-2}^2\frac{\big(R^{\frac{N-1}{2}}\phi_{1,A_R}(r+R)-R^{\frac{N-1}{2}}\phi_{1,A_R}(\tilde r+R)\big)\big(\psi(r)-\psi(\tilde r)\big)}{|r-\tilde r|^{1+2s}}K_R(r,\tilde r)drd\tilde r\nonumber\\
&=\k(N,s)\int_{-2}^2\int_{-2}^2\frac{\big(R^{\frac{N-1}{2}}\phi_{1,A_R}(r+R)-R^{\frac{N-1}{2}}\phi_{1,A_R}(\tilde r+R)\big)\big(\psi(r)-\psi(\tilde r)\big)}{|r-\tilde r|^{1+2s}}drd\tilde r\nonumber\\
&\quad+\int_{-2}^2\int_{-2}^2\frac{\big(R^{\frac{N-1}{2}}\phi_{1,A_R}(r+R)-R^{\frac{N-1}{2}}\phi_{1,A_R}(\tilde r+R)\big)\big(\psi(r)-\psi(\tilde r)\big)}{|r-\tilde r|^{1+2s}}\Big(K_R(r,\tilde r)-\k(N,s)\Big)drd\tilde r\nonumber\\
&=:\calE_{(-2,2)}\big(R^{\frac{N-1}{2}}\phi_{1,A_R},\psi(\cdot+R)\big)+\calE_{(-2,2),R}\big(R^{\frac{N-1}{2}}\phi_{1,A_R}(\cdot+R),\psi\big)\label{eq-1.46}.
\end{align}
On one hand, by the weak convergence \eqref{eq-weak-limit-of-the-rescaled-eigenfunction} we have
\begin{align}
&\lim_{R\to\infty}\calE_{(-2,2)}\big(R^{\frac{N-1}{2}}\phi_{1,A_R}(\cdot+R),\psi\big)=\k(N,s)\int_{-2}^2\int_{-2}^2\frac{\big(u_\infty(r)-u_\infty(\tilde r)\big)\big(\psi(r)-\psi(\tilde r)\big)}{|r-\tilde r|^{1+2s}}drd\tilde r.\label{eq-1.47}
\end{align}
On the other hand, 
\begin{align}
&\Big|\calE_{(-2,2),R}\big(R^{\frac{N-1}{2}}\phi_{1,A_R}(\cdot+R),\psi\big)\Big|\nonumber\\
&\leq  \sup_{r,\tilde r\in (-2,2)}\big|K_R(\cdot,\cdot)-\k(N,s)\big|\int_{-2}^2\int_{-2}^2\frac{\big|\big(R^{\frac{N-1}{2}}\phi_{1,A_R}(r+R)-R^{\frac{N-1}{2}}\phi_{1,A_R}(\tilde r+R)\big)\big(\psi(r)-\psi(\tilde r)\big)\big|}{|r-\tilde r|^{1+2s}}\nonumber\\
&\leq \sup_{r,\tilde r\in (-2,2)}\big|K_R(\cdot,\cdot)-\k(N,s)\big|[R^{\frac{N-1}{2}}\phi_{1,A_R}(\cdot+R)]_{H^s(-2,+2)}[\psi]_{H^s(-2,+2)}\nonumber\\
&\leq C(N,s,\psi)\sup_{r,\tilde r\in (-2,2)}\big|K_R(\cdot,\cdot)-\k(N,s)\big|\rightarrow 0\quad\text{as}\quad R\to\infty\label{eq-1.48},
\end{align}
where we used \eqref{eq-uniform-convergence-K-R} and \eqref{eq-loc-energy-estimate-of-the-rescaled-first-eigenfunction}. Plugging \eqref{eq-1.47} and \eqref{eq-1.48} into \eqref{eq-1.46} yields 
\begin{align}
&\int_{-2}^2\int_{-2}^2\frac{\big(R^{\frac{N-1}{2}}\phi_{1,A_R}(r+R)-R^{\frac{N-1}{2}}\phi_{1,A_R}(\tilde r+R)\big)\big(\psi(r)-\psi(\tilde r)\big)}{|r-\tilde r|^{1+2s}}K_R(r,\tilde r)drd\tilde r\nonumber\\
&=\k(N,s)\int_{-2}^2\int_{-2}^2\frac{\big(u_\infty(r)-u_\infty(\tilde r)\big)\big(\psi(r)-\psi(\tilde r)\big)}{|r-\tilde r|^{1+2s}}drd\tilde r.\label{eq-limit-of-restricted-bilinear-form}
\end{align}
Now with \eqref{eq-limit-of-modified-Killing-term} and \eqref{eq-limit-of-restricted-bilinear-form} we can pass to a limit in \eqref{eq-splitting-quadratic-form} to obtain (along a subsequence)
\begin{align*}
\l_\infty\int_{0}^1u_\infty(r)\psi(r)dr&=\kappa(N,s)\frac{c_{N,s}}{2}\int_{\R^2}\frac{(u_\infty(r)-u_\infty(\tilde r))(\psi(r)-\psi(\tilde r))}{|r-\tilde r|^{1+2s}}drd\tilde r \\
&=\frac{b_{1,s}}{2}\int_{\R^2}\frac{(u_\infty(r)-u_\infty(\tilde r))(\psi(r)-\psi(\tilde r))}{|r-\tilde r|^{1+2s}}drd\tilde r\qquad\text{for all}\,\psi\in C^\infty_c(0,1),
\end{align*}
where we used that $b_{N,s}\k(N,s)=\frac{s4^s\G(\frac{N}{2}+s)}{\pi^{\frac{N}{2}}\G(1-s)}\frac{\pi^{\frac{N-1}{2}}\G(s+\frac{1}{2})}{\G(\frac{N}{2}+s)}=\frac{s4^s\G(s+\frac{1}{2})}{\sqrt{\pi}\G(1-s)}=b_{1,s}$. Since $u_\infty\geq 0$ in $(0,1)$ by the pointwise convergence \eqref{eq-pointwise-limit-res-eigen}, it follows that $\l_\infty=\l_{1,s}(0,1)$ and $u_\infty=\phi_1>0$ by the normalization condition. Since  $\lambda_{1,s}(0,1)$ is simple, we deduce that the whole sequence $(R^{\frac{N-1}{2}}\phi_{1,A_R}(\cdot+R))_R$ converges to $\phi_1$ in $L^2((0,1))$. In conclusion, we have for $R\to\infty$
\begin{align*}
 \frac{\al_R}{R^{\frac{N-1}{2}}\o_N} &=\int_{0}^1(1+\frac{r}{R})^{N-1}R^{\frac{N-1}{2}}\phi_{1,A_R}(r+R)\phi_2(r)dr\rightarrow \int_{0}^1u_\infty(r)\phi_2(r)dr=\int_{0}^1\phi_1(r)\phi_2(r)dr=0.
\end{align*}
Hence, \eqref{eq-limit-of-alpha-R} holds.
\end{proof}
From Lemma \ref{eq-uniform-estimate-lambda-1-A-R}, and Lemma \ref{lem-3.2} we deduce 
\begin{lemma}\label{lem-3.3}
There exists $C(N,s)>0$ such that 
\be
\l_{2,s}(A_R)\leq C(N,s)\quad\text{for all $R\geq 1$.}
\ee
\end{lemma}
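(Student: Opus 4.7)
\emph{Proof plan.} The plan is to use the Rayleigh-quotient characterization
\[
\l_{2,s}(A_R) \le \frac{\calE_s(v,v)}{\|v\|_{L^2(A_R)}^2}
\]
valid for any $v\in\calH_0^s(A_R)\setminus\{0\}$ with $\langle v,\phi_{1,A_R}\rangle_{L^2(A_R)}=0$, and to plug in a small modification of the natural radial candidate $\Psi_R(x):=\phi_2(|x|-R)$, where $\phi_2$ is an $L^2(0,1)$-normalized second eigenfunction of the interval $(0,1)$. Since $\Psi_R$ is not a priori orthogonal to $\phi_{1,A_R}$, I would work with
\[
v_R := \Psi_R - \al_R\,\phi_{1,A_R},
\]
with $\al_R$ as in \eqref{eq-def-alpha-R}, so that $v_R \perp \phi_{1,A_R}$ in $L^2(A_R)$ by construction. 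Note that $\Psi_R\in\calH_0^s(A_R)$ because $\phi_2$ vanishes at $0$ and $1$.

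A short bilinear-form computation, using $\calE_s(\phi_{1,A_R},\phi_{1,A_R})=\l_{1,s}(A_R)$ together with $\calE_s(\Psi_R,\phi_{1,A_R})=\l_{1,s}(A_R)\al_R$ (obtained by testing the weak eigenvalue equation for $\phi_{1,A_R}$ against $\Psi_R$), then yields
\[
\|v_R\|_{L^2(A_R)}^2 = \|\Psi_R\|_{L^2(A_R)}^2 - \al_R^2, \qquad \calE_s(v_R,v_R) = \calE_s(\Psi_R,\Psi_R) - \l_{1,s}(A_R)\,\al_R^2 \le \calE_s(\Psi_R,\Psi_R).
\]
It would thus suffice to establish the two asymptotic estimates
\[
R^{-(N-1)}\calE_s(\Psi_R,\Psi_R)\le C(N,s),\qquad R^{-(N-1)}\|v_R\|_{L^2(A_R)}^2 \ge c(N,s)>0
\]
for $R$ sufficiently large; together they produce $\l_{2,s}(A_R)\le C'(N,s)$ for $R\ge R_0$, and the remaining bounded range $R\in[1,R_0]$ would be handled by continuous dependence of the eigenvalue on $R$ on compact intervals, exactly as at the end of Lemma~\ref{eq-uniform-estimate-lambda-1-A-R}.

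The energy bound I would obtain by rerunning verbatim the calculation that leads to \eqref{eq-uniform-rescaled-first-eigenfunction}: that argument only uses that the one-dimensional profile is compactly supported in $(0,1)$ and has finite $H^s$-seminorm, so nothing changes when $\phi_1$ is replaced by $\phi_2$. The lower bound on $\|v_R\|_{L^2(A_R)}^2$ follows from the polar-coordinate identity
\[
R^{-(N-1)}\|\Psi_R\|_{L^2(A_R)}^2 = \o_N \int_0^1(1+r/R)^{N-1}\phi_2(r)^2\,dr \longrightarrow \o_N,
\]
combined with Lemma~\ref{lem-3.2}, which gives $R^{-(N-1)}\al_R^2 = (R^{-(N-1)/2}\al_R)^2 \to 0$. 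The main obstacle is precisely this last ingredient: without the orthogonality correction $\al_R\,\phi_{1,A_R}$ the naive test function $\Psi_R$ is not admissible in the variational characterization of $\l_{2,s}$, and the correction is only harmless once one knows that $R^{-(N-1)/2}\al_R\to 0$. This is exactly the content of Lemma~\ref{lem-3.2} and is what makes the projection negligible with respect to both the energy and the $L^2$-norm of $\Psi_R$.
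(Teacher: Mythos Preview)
Your proposal is correct and follows essentially the same route as the paper's own proof: both use the test function $v_R=\Psi_R-\al_R\,\phi_{1,A_R}$ in the variational characterization of $\l_{2,s}(A_R)$, compute $\calE_s(v_R,v_R)=\calE_s(\Psi_R,\Psi_R)-\l_{1,s}(A_R)\al_R^2$ and $\|v_R\|_{L^2}^2=\|\Psi_R\|_{L^2}^2-\al_R^2$, re-run the radial energy computation leading to \eqref{eq-uniform-rescaled-first-eigenfunction} with $\phi_2$ in place of $\phi_1$, and invoke Lemma~\ref{lem-3.2} to make the $\al_R$-terms negligible. The only cosmetic difference is that you drop the nonnegative term $\l_{1,s}(A_R)\al_R^2$ from the numerator right away, whereas the paper keeps it and lets \eqref{eq-37} handle it.
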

\begin{proof}
As in the proof of Lemma \ref{eq-uniform-estimate-lambda-1-A-R} it is enough to show that the claim holds for $R\geq R_0$ for some $R_0\geq 1$. Let $\phi_2$ be a (normalized) second eigenfunction of the interval $(0,1)$. We define 
$$
\Psi_R(x) = \phi_2(|x|-R)\quad\,\text{for all}\ x\,\in\R^N.
$$
Next, we let $\Upsilon_R$ be the projection of $\Psi_R$ into the subspace $\R\phi_{1,A_R}$, where $\phi_{1,A_R}$ is the first eigenfunction of $A_R$. That is,
$$
\Upsilon_R:=\Psi_R-\Big[\int_{A_R}\phi_{1,A_R}(x)\Psi_R(x)\,dx\Big]\phi_{1,A_R}=\Psi_R-\al_R\phi_{1,A_R},
$$
where $\alpha_R$ is defined as in Lemma \ref{lem-3.2}.
It is clear that $\Upsilon_R=0$ in $\R^N\setminus A_R$ and that $\int_{A_R}\Upsilon_R(x)\phi_{1,A_R}(x)dx=0$. Moreover, $\Upsilon_R\in H^s(\R^N)$ by construction. By the variational characterization of $\l_{2,s}(A_R)$, we get 
\begin{align}
\frac{2}{b_{N,s}}&\l_{2,s}(A_R)\leq \frac{\int_{\R^{2N}}\frac{\big(\Upsilon_R(x)-\Upsilon_R(y)\big)^2}{|x-y|^{N+2s}}dxdy}{\int_{A_R}\Upsilon_R^2(x)dx}\nonumber\\
&=\frac{\int_{\R^{2N}}\frac{\big(\Psi_R(x)-\Psi_R(y)\big)^2}{|x-y|^{N+2s}}dxdy+\al_R^2\int_{\R^{2N}}\frac{(\phi_{1,A_R}(x)-\phi_{1,A_R}(y))^2}{|x-y|^{N+2s}}dxdy-2\al_R^2\l_{1,s}(A_R)}{\int_{A_R}\Psi_R^2(x)dx-\al_R^2}\nonumber\\
&=\frac{\int_{\R^{2N}}\frac{\big(\Psi_R(x)-\Psi_R(y)\big)^2}{|x-y|^{N+2s}}dxdy-\al_R^2\l_{1,s}(A_R)}{\int_{A_R}\Psi_R^2(x)\,dx-\al_R^2}.
\label{eq-estimate-lambda-2-A-R}
\end{align}
First, we note that the estimate \eqref{eq-uniform-rescaled-first-eigenfunction} does not use the fact that $\phi_1$ is a first eigenfunction. It only uses that $\phi_1\in \cH^s_0((0,1))$. In particular, we also have
\be\label{eq-uniform-estimate-res-second-eigenfunction}
 R^{-(N-1)}\int_{\R^{2N}}\frac{\big(\Psi_R(x)-\Psi_R(y)\big)^2}{|x-y|^{N+2s}}dxdy\leq C(N,s)\quad\text{as $R\to\infty$}.
\ee
Moreover, since $\lambda_{1,2}(A_R)$ is uniformly bounded, we have, using Lemma \ref{lem-3.3} that 
\be\label{eq-37}
\lim_{R\to\infty}R^{-(N-1)}\alpha_R^2\lambda_{1,s}(A_R)=0=\lim_{R\to\infty}R^{-(N-1)}\alpha_R^2.
\ee
It is also clear that 
\be\label{eq-38}
R^{-(N-1)}\int_{A_R}\Psi_R^2(x)\,dx=\o_N\int_{0}^1\phi_2^2(r)(1+r/R)^{N-1}dr\to 1\quad\text{as $R\to\infty$}.
\ee
We deduce from \eqref{eq-uniform-estimate-res-second-eigenfunction}, \eqref{eq-37}, and \eqref{eq-38} that the RHS of \eqref{eq-estimate-lambda-2-A-R}  is uniformly bounded for $R$ sufficiently large.
\end{proof}

\section{Uniform estimate of fractional normal derivative of second radial eigenfunctions of \texorpdfstring{$A_R$}{AR}}\label{sec-4}
For simplicity we  denote by $u_R$ a fixed normalized radial eigenfunction (if any) corresponding to $\l_{2,s}(A_R)$ and define $w_R:\R\to \R$ by $w_R(r):=R^{\frac{N-1}{2}}\ov {u}_R(r+R)$ for $r>0$, where $u_R(x)=\ov{u}_R(|x|)$, and $w_R(r)=0$ for $r\leq 0$. Consider the functions $\ov K_R$ and $V_R$ defined respectively by
$$
\ov K_R(r,\tilde r)=\frac{1}{|r-\tilde r|^{1+2s}}\frac{(r+R)^{\frac{N-1}{2}}(\tilde r+R)^{\frac{N-1}{2}}}{R^{N-1}}\int_{\frac{\sqrt{(r+R)(\tilde r+R)}}{|r-\tilde r|}(\mathbb{S}^{N-1}-e_1)}\frac{d\theta}{(1+|\theta|^2)^{\frac{N+2s}{2}}}\quad \text{for all $r\neq \tilde r$},
$$
and 
$$
V_R(r)= 2\int_{(-R,\infty)\setminus(-2,2)}\ov K_R(r,\tilde r)d\tilde r.
$$
Note that from \eqref{eq-uniform-estimate-killing-term} and \eqref{eq-uniform-estimate-of-K-R} we have, for $R$ sufficiently large
\begin{align}
\ov C(N,s)|r-\ti r|^{-1-2s}\leq \ov K_R(r,\tilde r)&\leq C(N,s)|r-\ti r|^{-1-2s}\quad\text{for all $r,\tilde r\in [-a,a]$, $a>0$ and}
\label{eq-uniform-ellipticity}\\
0< V_R(r)&\leq C(N,s)\label{eq-uniform-estimate-potential}.
\end{align}
We need the following in the proof of Proposition \ref{eq-uniform-estimate-fract-normal-der-of-w-R} below.

\begin{lemma}\label{sec4:preparations}
Let $w_R$ be defined as above. Then, $w_R\in H^s(\R)$ with $\frac{1}{\omega_N}(\frac{R}{1+R})^{N-1}\leq \|w_R\|_{L^2(\R)}\leq \frac{1}{\omega_N}$,
\begin{equation}\label{sec4:preparations-eq1}
\quad \|w_R\|_{H^s(\R)}\leq c(N,s),\quad \text{and}\quad \|w_R\|_{L^{\infty}(\R)}\leq c(N,s)\quad\text{for all $R>1$.}
\end{equation}
Moreover, it solves weakly the equation
\be\label{eq-equation-rescaled-second-eigenfunction}
\calL_{K_R}w_R+w_R V_R=\l_{2,s}(A_R)\big(1+r/R\big)^{N-1} w_R\qquad\text{in}\qquad (0,1),
\ee
where
$$
\langle\calL_{K_R}u,w\rangle:=\frac{b_{N,s}}{2}\int_{\R^2}(u(r)-u(\ti r))(w(r)-w(\ti r))K_R(r,\ti r)drd\ti r
$$
with $$K_R(r,\ti r)=1_{(-2,2)}(r)1_{(-2,2)}(\tilde r)\ov{K}_R(r,\tilde r).$$
\end{lemma}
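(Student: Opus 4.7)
The plan is to treat the four claims in turn, leveraging throughout the polar-coordinates transformation set up in Section~\ref{sec-3}. The $L^2$- and $H^s$-bounds are routine. The normalization $\int_{A_R} u_R^2\,dx = 1$ becomes, in polar coordinates, $1 = \omega_N\int_0^1(1+r/R)^{N-1}w_R^2(r)\,dr$, and since $(1+r/R)^{N-1}$ lies in $[1,((1+R)/R)^{N-1}]$ on $[0,1]$, both sides of the $L^2$-estimate follow at once. For the $H^s$-seminorm I would retrace the polar-coordinates expansion of the Gagliardo seminorm from the proof of Lemma~\ref{eq-uniform-estimate-lambda-1-A-R} (compare \eqref{eq-norm-of-rescaled-eigenfunctions}) with $u_R$ in place of $\Phi_R$, obtaining an identity of the shape
\[
\tfrac{2}{b_{N,s}\omega_N}\lambda_{2,s}(A_R) \;=\; \int\!\!\int (w_R(r)-w_R(\tilde r))^2 \,\ov K_R(r,\tilde r)\,dr\,d\tilde r.
\]
After splitting off the tail where one variable lies outside $(-2,2)$, the uniform ellipticity \eqref{eq-uniform-ellipticity} controls $[w_R]^2_{H^s(\R)}$ by the left-hand side, which is uniformly bounded by Lemma~\ref{lem-3.3}; combined with the $L^2$-bound this yields $\|w_R\|_{H^s(\R)}\leq c(N,s)$.

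To obtain the weak equation \eqref{eq-equation-rescaled-second-eigenfunction} I would test the weak formulation of the eigenvalue equation for $u_R$ against $\psi_R(x):=R^{-(N-1)/2}\psi(|x|-R)$ for arbitrary $\psi\in C^\infty_c(0,1)$, which clearly belongs to $\cH^s_0(A_R)$. Carrying out the polar-coordinates transformation exactly as in \eqref{eq-splitting-quadratic-form} splits the bilinear form into (i) a contribution supported on $(-2,2)^2$, identified with $\langle\calL_{K_R}w_R,\psi\rangle$ once $K_R$ is recognised as the restriction of $\ov K_R$, plus (ii) a tail in which only the diagonal products $w_R(r)\psi(r)$ survive (since $w_R,\psi$ are supported in $(0,1)$), integrated against $\int_{(-R,\infty)\setminus(-2,2)}\ov K_R(r,\tilde r)\,d\tilde r = V_R(r)/2$ and producing the $w_R V_R$-term. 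A direct polar computation turns the right-hand side into $\lambda_{2,s}(A_R)\int_0^1 (1+r/R)^{N-1} w_R \psi\,dr$, finishing the derivation.

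The hard part will be the uniform $L^\infty$-bound. Equation \eqref{eq-equation-rescaled-second-eigenfunction} is a linear nonlocal problem on $(0,1)$ whose kernel $K_R$ is uniformly comparable to $|r-\tilde r|^{-1-2s}$ on $(-2,2)^2$ by \eqref{eq-uniform-ellipticity}, whose zero-order potential $V_R$ is uniformly bounded by \eqref{eq-uniform-estimate-potential}, and whose right-hand side coefficient $\lambda_{2,s}(A_R)(1+r/R)^{N-1}$ is bounded by Lemma~\ref{lem-3.3}. My plan is to apply standard Moser/De Giorgi iteration for weak solutions of nonlocal equations with uniformly elliptic symmetric kernels, starting from the $L^2$- and $H^s$-bounds already proved, to conclude $\|w_R\|_{L^\infty(\R)}\leq c(N,s)$. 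The real obstacle is making the iteration constants and the embedding constants depend only on $N,s$ and not on $R$ --- which is precisely what the uniform estimates \eqref{eq-uniform-ellipticity}--\eqref{eq-uniform-estimate-potential} have been set up to guarantee.
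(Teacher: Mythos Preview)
Your plan matches the paper's proof for the $L^2$ and $H^s$ bounds and for the derivation of the weak equation; these go through exactly as you describe via polar coordinates and the $(-2,2)^2$/tail splitting. For the $L^{\infty}$ bound the paper follows the same De Giorgi-type route (citing \cite[Theorem 4.1]{FJ22}), but after one technical step your sketch omits: since $K_R$ vanishes outside $(-2,2)^2$ and is therefore not globally elliptic, the paper first extends it to a kernel $\tilde K_R$ by setting $\tilde K_R(r,\tilde r)=|r-\tilde r|^{-1-2s}$ whenever $(r,\tilde r)\notin(-2,2)^2$, checks that $|w_R|$ is then a weak subsolution of $\calL_{\tilde K_R}|w_R|\le C'(N,s)|w_R|$ in $(0,1)$, and applies the cited boundedness theorem to the globally elliptic $\tilde K_R$.
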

\begin{proof}
	First note that we have
	$$
	\|w_R\|_{L^2(\R)}^2= \int_0^1 R^{N-1} \overline{u}^2(r+R)\ dr=\int_R^{R+1}R^{N-1} \bar{u}^2(r)\ dr=\int_{A_R}\frac{R^{N-1}}{|x|^{N-1}\omega_N}u_R^2(x)\ dx.
    $$
    This shows the first claim by the $L^2$-normalization of $u_R$. The fact that $w_R\in H^s_{loc}(\R^N)$ follows as in the proof of Lemma \ref{lem-3.2}. Indeed, similarly to the calculation in equation \eqref{eq-loc-energy-estimate-of-the-rescaled-first-eigenfunction} we have the estimate 
\begin{align*}
\frac{2}{b_{N,s}}\l_{2,s}(A_R)&=\int_{\R^{2N}}\frac{\big(u_{R}(x)-u_{R}(y)\big)^2}{|x-y|^{N+2s}}dxdy\\
&\geq\int_{-a}^a \int_{-a}^a \big(R^{\frac{N-1}{2}}\ov u_R(r+R)-R^{\frac{N-1}{2}}\ov u_R(\tilde r+R)\big)^2 \ov K_R(r,\tilde r)drd\tilde r\nonumber\\
&\geq \bar{C}(N,s)\big[R^{\frac{N-1}{2}}\ov u_R(\cdot+R)\big]^2_{H^s(-a,a)}\quad\text{for all $a>0$}.
\end{align*}
In particular, $[w_R]^2_{H^s((-2,2))}$ is bounded independently of $R$ by Lemma \ref{lem-3.3}. Now, by definition, $w_R=0$ in $\R\setminus(0,1)$ and thus the quantity
$$
\int_{\R^{2}}\frac{(w_R(r)-w_R(\tilde{r}))^2}{|r-\tilde{r}|^{1+2s}}\ drd\tilde{r}=\int_{(-2,2)^2}\frac{(w_R(r)-w_R(\tilde{r}))^2}{|r-\tilde{r}|^{1+2s}}\ drd\tilde{r}+2\int_0^1w_R^2(r) \int_{\R\setminus(-2,2)}|r-\tilde{r}|^{-1-2s}\ d\tilde{r}dr
$$
is also bounded independently of $R$. Before showing the boundedness, we show next equation \eqref{eq-equation-rescaled-second-eigenfunction}. Indeed, this follows from the identity 
\begin{align*}
&\frac{2}{b_{N,s}}\l_{2,s}(A_R)\int_{0}^1(1+\frac{r}{R})^{N-1}R^{\frac{N-1}{2}}\ov u_{R}(r+R)\psi(r)dr=\l_{2,s}(A_R)\frac{1}{\o_N}\int_{A_R}u_{R}(x)\psi_R(x)dx\nonumber\\
&=\int_{-2}^2 \int_{-2}^2 \big(R^{\frac{N-1}{2}}\ov u_R(r+R)-R^{\frac{N-1}{2}}\ov u_R(\tilde r+R)\big)\big(\psi(r)-\psi(\tilde r)\big)\ov K_R(r,\tilde r)drd\tilde r\nonumber\\
&+2\int_{0}^1 R^{\frac{N-1}{2}}\ov u_R(r+R)\psi(r)\int_{(-R,\infty)\setminus(-2,2)}\ov K_R(r,\tilde r)d\tilde rdr.
\end{align*}
for $\psi\in C^\infty_c(0,1)$ and with $\psi_R(x)=R^{-\frac{N-1}{2}}\psi(|x|-R)$. To see now the boundedness, let $\tilde{K}_R(r,\tilde{r})=K_R(r,\tilde{r})$ if $r,\tilde{r}\in(-2,2)$ and $\tilde{K}_R(r,\tilde{r})=|r-\tilde{r}|^{-1-2s}$ otherwise. Then 
$$
\min\Big(\ov C(N,s),1\Big)|r-\ti r|^{-1-2s}\leq \tilde K_R(r,\tilde r)\leq \max\Big(C(N,s),1\Big)|r-\ti r|^{-1-2s} \quad \text{for all $r,\tilde{r}\in\R$}
$$
and $|w_R|$ satisfies weakly
$$
\calL_{\tilde{K}_R}|w_R| \leq \Big(\l_{2,s}(A_R)\big(1+r/R\big)^{N-1} -V_R+\int_{\R\setminus(-2,2)} |r-\tilde{r}|^{-1-2s}\ d\tilde{r}\Big) |w_R| \leq C'(N,s)|w_R| \qquad\text{in}\qquad (0,1),
$$
where
$$
\langle\calL_{\tilde K_R}u,w\rangle:=\frac{b_{N,s}}{2}\int_{\R^2}(u(r)-u(\ti r))(w(r)-w(\ti r))\tilde K_R(r,\ti r)drd\ti r.
$$
By \cite[Theorem 4.1]{FJ22}, it follows that $|w_R|\in L^{\infty}(\R)$ with a bound independent of $R$ (since the kernel $\tilde{K}_R$ has bounds independent of $R$). Thus \eqref{sec4:preparations-eq1} holds.
\end{proof}

Let $w_R\in H^s(\R)$ be as above and $d(\cdot)=\min(\cdot, 1-\cdot)$ be the distance function to the boundary of $(0,1)$. Then we have 
\begin{proposition}\label{eq-prop-uniform-estimate-fractional-normal-derivative}
There exists a constant $C(N,s)>0$  so that for all $\b\in(0,s)$ we have
\be\label{eq-uniform-estimate-fract-normal-der-of-w-R}
\big\|\frac{w_R}{d^s}\big\|_{C^{s-\b}([0,1])}\leq C(N,s)\quad\text{for all $R\geq 2$},
\ee
where $d$ is given as above.
\end{proposition}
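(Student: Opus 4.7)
The plan is to interpret equation \eqref{eq-equation-rescaled-second-eigenfunction} as a nonlocal elliptic equation on the fixed interval $(0,1)$ with data and kernel whose structural bounds do not depend on $R$, and then to invoke a boundary H\"older regularity theorem. Concretely, I would first rewrite \eqref{eq-equation-rescaled-second-eigenfunction} as
\[
\calL_{K_R} w_R = g_R \quad\text{in $(0,1)$}, \qquad w_R \equiv 0 \quad\text{in $\R\setminus(0,1)$},
\]
with $g_R(r) := \bigl(\l_{2,s}(A_R)(1+r/R)^{N-1} - V_R(r)\bigr) w_R(r)$. Combining Lemma \ref{lem-3.3}, the bound \eqref{eq-uniform-estimate-potential}, and the $L^\infty$-estimate from Lemma \ref{sec4:preparations}, one gets $\|g_R\|_{L^\infty(\R)} \leq C(N,s)$ uniformly in $R \geq 2$.

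Next I would examine the kernel. By \eqref{eq-new-def-of-F_R}, on $(-2,2)^2$ one has
\[
K_R(r,\tilde r) = a_R(r,\tilde r)\,|r-\tilde r|^{-1-2s}, \qquad a_R(r,\tilde r) := \frac{(r+R)^{\frac{N-1}{2}}(\tilde r+R)^{\frac{N-1}{2}}}{R^{N-1}} \cdot \frac{\pi^{\frac{N-1}{2}}}{\Gamma(\frac{N-1}{2})}\, \psi\!\left(\frac{|r-\tilde r|}{2\sqrt{(r+R)(\tilde r+R)}}\right).
\]
For $R \geq 2$ and $r,\tilde r \in (-2,2)$, the smooth prefactor is uniformly bounded between positive constants, and Lemma \ref{eq-first-lemma} together with \eqref{eq-uniform-convergence-K-R} ensure that $a_R$ is uniformly bounded above and below by positive constants and uniformly H\"older continuous on $(-2,2)^2$ with exponent $s+\delta$, independently of $R$.

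With these uniform structural bounds in hand, I would apply a boundary H\"older regularity theorem for nonlocal elliptic equations $\calL_K u = g$ posed on a bounded $C^{1,1}$ domain (here, the interval $(0,1)$) with bounded right-hand side, a symmetric kernel of order $2s$ that is uniformly elliptic, and whose angular part is H\"older continuous. Such a result provides, for every $\beta \in (0,s)$, an estimate $\|u/d^s\|_{C^{s-\beta}([0,1])} \leq C\bigl(\|g\|_{L^\infty} + \|u\|_{L^\infty}\bigr)$ with $C$ depending only on the ellipticity and H\"older bounds of the kernel (see e.g.\ \cite{FJ22} and the references therein). Since all such quantities are controlled uniformly in $R$, the claim \eqref{eq-uniform-estimate-fract-normal-der-of-w-R} follows.

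The main technical obstacle I expect is the fact that $K_R$ is compactly supported in $(-2,2)^2$, which places us slightly outside the usual framework of boundary regularity results. This is handled by noting that the tail correction between $K_R$ and the full kernel $\ov K_R$ is exactly the zeroth-order potential $V_R$, which is bounded by \eqref{eq-uniform-estimate-potential} and has already been absorbed into $g_R$. The necessity of a strictly positive $\beta$ reflects the fact that, for $s \le 1/2$, the scalar function $\psi$ is only $C^{0,2s}$ at the origin by Lemma \ref{eq-first-lemma}(ii)--(iii), so the kernel does not enjoy full Lipschitz regularity in its radial variable and one cannot expect to reach the endpoint $\beta = 0$ uniformly by this argument.
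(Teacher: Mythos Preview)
Your proposal is correct and follows essentially the same route as the paper: verify that the kernel $K_R$ has $R$-independent ellipticity and $C^{s+\delta}$ H\"older bounds (via Lemma~\ref{eq-first-lemma} and Corollary~\ref{eq-Holder-reg-2}), check that the zeroth-order data are uniformly bounded (via Lemma~\ref{lem-3.3}, \eqref{eq-uniform-estimate-potential}, and Lemma~\ref{sec4:preparations}), and then invoke an off-the-shelf boundary regularity estimate. The only notable difference is organizational: the paper keeps the equation in Schr\"odinger form $\calL_{K_R}w_R + V_R w_R = \l_{2,s}(A_R)(1+r/R)^{N-1}w_R$ and appeals directly to \cite[Theorem~1.8 and Remark~2.1]{F2017}, whose hypotheses are phrased precisely in terms of the function $\ov\l_{K_R}(t,r,\pm1)=r^{1+2s}\ov K_R(t,t\pm r)$ and a bounded potential, whereas you absorb $V_R$ into the right-hand side; your citation of \cite{FJ22} is slightly off (that reference is used in the paper for the $L^\infty$ bound, not for the $u/d^s$ estimate), and the result you actually need is the one in \cite{F2017}.
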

\begin{proof}
Let $R\geq 2$,
$$
\t_R(t,r):=\frac{\sqrt{(t+R)(t\pm r+R)}}{r},
$$
and consider the function 
$\ov\l_{K_R}:\R\times [0,\infty)\times\{\pm 1\}\to\R$ defined by 
\begin{align*}
\ov\l_{K_R}(t,r,\pm1)&= r^{1+2s}\ov K_R(t,t\pm r)\\
&=\frac{(t+R)^{\frac{N-1}{2}}(t\pm r+R)^{\frac{N-1}{2}}}{R^{N-1}}\int_{\t_R(t,r)(\mathbb{S}^{N-1}-e_1)}\frac{d\theta}{(1+|\theta|^2)^{\frac{N+2s}{2}}},
\end{align*}
for all $r\neq 0$ and 
$$
\ov\l_{K_R}(t,0,\pm 1):=\lim_{r\to 0^+}\ov \l_{K_R}(t,r,\pm 1)=(1+t/R)^{N-1}\frac{\pi^{\frac{N-1}{2}}\G(\frac{1+2s}{2})}{\G(\frac{N+2s}{2})}.
$$
By Corollary \ref{eq-Holder-reg-2} we have
\begin{align}\label{eq-Holder-estimate-lambda-K-R}
\|\ov \l_{K_R}\|_{C^{0,s+\d}\big([-2,2]\times[0,2]\times \{\pm 1\}\big)}\leq C_0\quad\text{for $R\geq 2$.}
\end{align}
Quoting \cite[Theorem 1.8 and Remark 2.1]{F2017} and since $w_R$ solves \eqref{eq-equation-rescaled-second-eigenfunction}, we get
\begin{align}\label{eq-uniform-estimate-fract-normal-derivative}
\big\|\frac{w_R}{d^{s}}\big\|_{C^{s-\beta}([0,1])}\leq C\big(\|w_R\|_{L^2(0,1)}+\|w_R\|_{\calL^1_s}+\|\l_{2,s}(A_R)\big(1+r/R\big)^{N-1} w_R\|_{L^\infty(0,1)}\big),
\end{align}
 for all $\beta\in (0,s)$ with $C>0$ independent of $R$ by \eqref{eq-Holder-estimate-lambda-K-R}. Here, $\|w_R\|_{\cL^1_s}:=\int_{\R}\frac{|w_R(r)|}{1+r^{1+2s}}dr$ and we have used \eqref{eq-uniform-ellipticity}, \eqref{eq-uniform-estimate-potential}, and \eqref{eq-Holder-estimate-lambda-K-R}. Combining this with Lemma \ref{sec4:preparations}, we see that the RHS of \eqref{eq-uniform-estimate-fract-normal-derivative} is bounded independently of $R$, as claimed.
\end{proof}
\begin{corollary}\label{eq-convergence-of-normal-derivative}
There exists $\phi_2\in\calH^s_0((0,1))$ solving 
\be \label{eq-second-eigenfct}
\Ds\phi_2=\l_{s,2}(0,1)\phi_2\quad\text{in}\quad (0,1)
\ee
so that, along some subsequence still denoted by $R$, it holds
\be
\frac{w_R}{d^s}\rightarrow \frac{\phi_2}{d^s}\quad\text{in}\quad C^0([0,1]).
\ee
\end{corollary}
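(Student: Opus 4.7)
The plan is to combine the uniform Hölder regularity of $w_R/d^s$ established in Proposition \ref{eq-prop-uniform-estimate-fractional-normal-derivative} with the limit-passage analysis already developed in the proof of Lemma \ref{lem-3.2}. Since $\{w_R/d^s\}_{R\geq 2}$ is uniformly bounded in $C^{s-\beta}([0,1])$ for some $\beta\in(0,s)$, the Arzel\`a--Ascoli theorem produces a subsequence (still denoted by $R$) and $g\in C^0([0,1])$ such that $w_R/d^s\to g$ in $C^0([0,1])$. Setting $\phi_2:=d^s g$, I immediately obtain $w_R\to\phi_2$ uniformly on $[0,1]$ (hence in $L^2((0,1))$) and $w_R/d^s\to\phi_2/d^s$ in $C^0([0,1])$. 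The uniform $H^s$-bound from Lemma \ref{sec4:preparations}, combined with weak compactness, allows me (after passing to a further subsequence) to assume $w_R\weakto\phi_2$ in $H^s(\R)$, so $\phi_2\in\calH^s_0((0,1))$. The nontriviality $\phi_2\not\equiv 0$ follows from the $L^2$-lower bound on $w_R$ provided by Lemma \ref{sec4:preparations}.

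Next, I would show that $\phi_2$ satisfies \eqref{eq-second-eigenfct} with $\l_\infty:=\lim_R\l_{2,s}(A_R)$ in place of $\l_{2,s}(0,1)$, the existence of this limit along a further subsequence being guaranteed by Lemma \ref{lem-3.3}. Given $\psi\in C^\infty_c((0,1))$, I test \eqref{eq-equation-rescaled-second-eigenfunction} against $\psi$ and mimic the splitting carried out in \eqref{eq-splitting-quadratic-form}: on the local part over $(-2,2)^2$, I exploit the weak $H^s$-convergence of $w_R$ together with the uniform convergence $K_R(r,\tilde r)\to\kappa(N,s)|r-\tilde r|^{-1-2s}$ on compact sets, available thanks to Corollary \ref{eq-Holder-reg-2}, exactly as in \eqref{eq-1.46}--\eqref{eq-limit-of-restricted-bilinear-form}; for the tail term $\int_0^1 w_R\psi V_R\,dr$, I apply dominated convergence as in \eqref{eq-eq-1}--\eqref{eq-limit-of-modified-Killing-term}. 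Invoking the identity $b_{N,s}\kappa(N,s)=b_{1,s}$, this yields that $\phi_2$ is a weak solution of $(-\Delta)^s\phi_2=\l_\infty\phi_2$ in $(0,1)$.

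Finally, I identify $\l_\infty=\l_{2,s}(0,1)$. The orthogonality $\int_{A_R}u_R\phi_{1,A_R}\,dx=0$ rewrites in radial coordinates as $\omega_N\int_0^1(1+r/R)^{N-1}w_R(r)R^{\frac{N-1}{2}}\phi_{1,A_R}(r+R)\,dr=0$. Passing to the limit using the $L^2$-convergence $R^{\frac{N-1}{2}}\phi_{1,A_R}(\cdot+R)\to\phi_1$ established in Lemma \ref{lem-3.2} together with $w_R\to\phi_2$ in $L^2((0,1))$ yields $\int_0^1\phi_1\phi_2\,dr=0$. Since $\phi_2\not\equiv 0$ and $\l_{1,s}(0,1)$ is simple with positive eigenfunction $\phi_1$, this forces $\l_\infty\geq\l_{2,s}(0,1)$. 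The reverse inequality $\l_\infty\leq\l_{2,s}(0,1)$ follows from a refinement of Lemma \ref{lem-3.3}: the very same test function computation, augmented by the convergence arguments of Lemma \ref{lem-3.2}, actually yields $\limsup_R\l_{2,s}(A_R)\leq\l_{2,s}(0,1)$.

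I expect the main technical obstacle to be the careful limit-passage in the nonlocal bilinear form, in particular controlling the tail contribution via $V_R$ uniformly in $R$. However, since every needed estimate parallels a step already performed in Lemma \ref{lem-3.2}, no genuinely new analytic idea is required; the key point remains the normalization identity $b_{N,s}\kappa(N,s)=b_{1,s}$ which matches the limiting spectral data on $(0,1)$ with the rescaled data on $A_R$.
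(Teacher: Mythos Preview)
Your proposal is correct and follows essentially the same route as the paper: Arzel\`a--Ascoli from the uniform $C^{s-\beta}$ bound, passage to the limit in the weak formulation by mimicking the computations of Lemma~\ref{lem-3.2}, and identification of $\lambda_\infty$ via the orthogonality $\int_0^1\phi_1\phi_2\,dr=0$ together with the refined upper bound $\limsup_R\lambda_{2,s}(A_R)\leq\lambda_{2,s}(0,1)$ from the test-function argument of Lemma~\ref{lem-3.3}. The only cosmetic difference is that you define $\phi_2:=d^s g$ directly from the uniform limit, whereas the paper obtains the weak/strong limit $w_\infty$ separately and identifies $q_\infty=w_\infty/d^s$ at the end; your ordering is slightly more economical but the substance is identical.
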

\begin{proof} By Proposition \ref{eq-prop-uniform-estimate-fractional-normal-derivative} and the Arzela-Ascoli theorem, there exists $q_\infty\in C^{s-\beta}([0,1])$ with $$\frac{w_R}{d^s}\rightarrow q_\infty\qquad\text{for $R\to\infty$ uniformly in $[0,1]$.}$$ It remains to show that $q_\infty=\frac{\phi}{d^s}$ for some $\phi$ solving \eqref{eq-second-eigenfct}. By a similar argument as for the case of the first eigenvalue, one obtains by Lemma \ref{sec4:preparations} (along some subsequence):
\be\label{eq-weak-and-strong-convergence-of-rescaled-second-eigenfunction}
w_R\rightarrow w_{\infty}\quad\text{weakly in}\quad H^s(\R)\qquad\text{and}\qquad w_R\rightarrow w_{\infty}\quad\text{in}\quad L^2((0,1))
\ee
and
\be\label{eq-pointwise-convergence-of-rescaled-second-eigenfunction}
w_R\rightarrow w_{\infty}\quad\text{pointwisely in}\quad (0,1).
\ee
Note that $w_{\infty}$ is nontrivial by the $L^2$-bounds in Lemma \ref{sec4:preparations}.
Passing to a limit in \eqref{eq-equation-rescaled-second-eigenfunction} as we did in the proof of Lemma \ref{lem-3.2}, we get 
\begin{align*}
\l_\infty\int_{0}^1\psi(r)w_\infty(r)dr&=\frac{\k(N,s)b_{N,s}}{2}\int_{\R^2}\frac{(w_\infty(r)-w_\infty(\tilde r))(\psi(r)-\psi(\tilde r))}{|r-\tilde r|^{1+2s}}drd\tilde r\\
&=\frac{b_{1,s}}{2}\int_{\R^2}\frac{(w_\infty(r)-w_\infty(\tilde r))(\psi(r)-\psi(\tilde r))}{|r-\tilde r|^{1+2s}}drd\tilde r,
\end{align*}
for all $\psi\in C^\infty_c(0,1)$, where $\lambda_\infty$ is the limit of $\lambda_{2,s}(A_R)$ along certain subsequence (see Lemma \ref{lem-3.3}). Next, we claim that $\lambda_{\infty}\leq \lambda_{2,s}(0,1)$. Indeed, since $R^{-(N-1)}\a^2_R\to 0$ as $R\to\infty$ by Lemma \ref{lem-3.2} and $\lambda_{1,s}(A_R)$ is uniformly bounded, we find similarly as \eqref{eq-estimate-lambda-2-A-R} in the proof of Lemma \ref{lem-3.3} with the same notation
\begin{align}
\frac{2}{b_{N,s}}\lambda_{2,s}(A_R) \leq \frac{\int_{\R^{2N}}\frac{\big(\Psi_R(x)-\Psi_R(y)\big)^2}{|x-y|^{N+2s}}dxdy-\al_R^2\l_{1,s}(A_R)}{\int_{A_R}\Psi_R^2(x)\,dx-\al_R^2}.
\end{align}
Passing in this inequality to the limit (along a subsequence that we still denote by $R$), we get 
\begin{align*}
\l_\infty=\lim_{R\to\infty}\l_{2,s}(A_R)&\leq \frac{b_{N,s}}{2}\lim_{R\to \infty} \frac{R^{-(N-1)}\int_{\R^{2N}}\frac{\big(\Psi_R(x)-\Psi_R(y)\big)^2}{|x-y|^{N+2s}}dxdy-R^{-(N-1)}\al_R^2\l_{1,s}(A_R)}{R^{-(N-1)}\int_{A_R}\Psi_R^2(x)\,dx-R^{-(N-1)}\al_R^2}\\
&=\frac{b_{N,s}}{2}\lim_{R\to \infty} \frac{R^{-(N-1)}\int_{\R^{2N}}\frac{\big(\Psi_R(x)-\Psi_R(y)\big)^2}{|x-y|^{N+2s}}dxdy}{R^{-(N-1)}\int_{A_R}\Psi_R^2(x)\,dx}\\
&=\frac{b_{N,s}}{2}\lim_{R\to \infty}\frac{\int_{-2}^2\int_{-2}^2\frac{(\phi_2(r)-\phi_2(\tilde r))^2}{|r-\tilde r|^{1+2s}}K_R(r,\tilde r)drd\tilde r+\int_{0}^1\phi_2^2(r)\int_{(-R,+\infty)\setminus (-2,2)} \frac{K_R(r,\tilde r)}{|r-\tilde r|^{1+2s}} d\tilde r dr }{\int_{0}^1(1+\frac{r}{R})^{N-1}\phi^2_2(r)dr}\\
&=\frac{\k(N,s)b_{N,s}}{2}\int_{\R^2}\frac{(\phi_2(r)-\phi_2(\tilde r))^2}{|r-\tilde r|^{1+2s}}drd\tilde r=\l_{2,s}(0,1),
\end{align*}
with $\kappa(N,s)=\frac{\pi^{\frac{N-1}{2}}\G(\frac{1+2s}{2})}{\G(\frac{N+2s}{2})}$, so that $\kappa(N,s)b_{N,s}=b_{1,s}$. Here we used \eqref{eq-uniform-convergence-K-R} and 
$$
\lim_{R\to\infty}\int_{0}^1\phi_2^2(r)\int_{(-R,+\infty)\setminus (-2,2)} \frac{K_R(r,\tilde r)}{|r-\tilde r|^{1+2s}} d\tilde r dr=\kappa(N,s)\int_{0}^1\phi_2^2(r)\int_{\R\setminus (-2,2)} \frac{d\tilde r}{|r-\tilde r|^{1+2s}} dr
$$
by the dominated convergence theorem (see e.g \eqref{eq-limit-of-modified-Killing-term}). This shows $\l_\infty\leq \l_{2,s}(0,1)$. From here, it follows that $w_{\infty}$ is an eigenfunction in $(0,1)$ with corresponding eigenvalue $\l_{\infty}\in \{\l_{1,s}(0,1),\l_{2,s}(0,1)\}$. Following the proof of Lemma \ref{lem-3.2}, let $\phi_1$ denote the first eigenfunction in $(0,1)$ and then it holds $R^{\frac{N-1}{2}}\phi_{1,A_R}(\cdot+R)\to \phi_1$ in $L^2((0,1))$. Then we have
\begin{align*}
\int_{0}^1 \phi_1(r)w_{\infty}(r)\ dr&=\lim_{R\to\infty} \int_0^1(1+r/R)^{N-1}w_R(r)R^{\frac{N-1}{2}}\phi_{1,A_R}(r+R)\ dr=\lim_{R\to\infty}\int_R^{R+1}r^{N-1}u_R(r)\phi_{1,A_R}(r)\ dr\\
&=\frac{1}{\omega_N}\lim_{R\to\infty} \int_{A_R} u_R(x)\phi_{1,A_R}(x)\ dx=0,
\end{align*}
since $u_R$ is an eigenfunction but not the first and thus orthogonal to $\phi_{1,A_R}$. It follows that $w_{\infty}$ is an eigenfunction orthogonal to $\phi_1$. Consequently, $\l_{\infty}=\l_{s,2}(0,1)$ and $w_{\infty}\in \cH^s_0((0,1))$ solves \eqref{eq-second-eigenfct}. By the pointwise convergence \eqref{eq-pointwise-convergence-of-rescaled-second-eigenfunction} and the regularity we see that $\frac{w_R}{d^s}\rightarrow \frac{w_\infty}{d^s}$ in $C^0_{loc}(0,1)$ and by uniqueness of the limit we deduce that $q_\infty=\frac{w_\infty}{d^s}$.
\end{proof}

\begin{remark}
	Following the proofs of Section \ref{sec-3} and Section \ref{sec-4}, along a similar strategy, one can show that if $u_R$ is any normalized radial eigenfunction with profile $\overline{u}_R$ and corresponding eigenvalue $\lambda_R$, then $\lambda_R$ is uniformly bounded in $R$ and after passing to a subsequence, $u_{\infty}:=\lim\limits_{R\to\infty} R^{\frac{N-1}{2}} \ov u_R(\cdot+R)$ is an eigenfunction in $(0,1)$ with corresponding eigenvalue $\lambda_{\infty}$. Here, the convergence is as for $(w_R)_R$ in the above proof. We conjecture, that if $u_R$ is the $n$-th radial eigenfunction, then $u_{\infty}$ will be an $n$-th eigenfunction in $(0,1)$. In order to show this, the Lemmas \ref{lem-3.2} and \ref{lem-3.3} need to be adjusted suitably in the following direction: One of the main steps is to show that the $n$-th eigenfunction in $(0,1)$ translated to $A_R$ as in the proof of Lemma \ref{lem-3.3} is orthogonal in the limit to all the eigenfunctions in $A_R$ belonging to eigenvalues below the $n$-th radial eigenvalue. However, this step is not clear, since we do not have any information on which eigenvalue in $A_R$ actually has a radial eigenfunction.
\end{remark}

\section{Proof of the nonradiality of second Eigenfunctions}\label{sec-5}
\begin{proof}[Proof of Theorem \ref{eq-main-thm}]Assume there is a second radial eigenfunction $u_R$ corresponding to $\l_{2,s}(A_R)$. To reach a contradiction, we prove that such $u_R$ must have Morse index greater than or equal to $N+1$ and this would contradict the fact that $u_R$ is a second eigenfunction. Let $w_R:=R^{\frac{N-1}{2}}\ov{u}_R(\cdot+R)$, we know by Corollary \ref{eq-convergence-of-normal-derivative} that there exists some subsequence still denoted by $R$ along which we have $\frac{w_R}{d^s}\rightarrow \frac{\phi_2}{d^s}$ in $C^0([0,1])$ where $d(r)=\min(r, 1-r)$ and $\phi_2$ is a second eigenfunction. We also know that $\phi_2$ is antisymmetric and it vanishes only at $r=1/2$ see e.g \cite[Theorem 5.2]{Fetal}. Moreover, $\frac{\phi_2}{d^s}(1)\frac{\phi_2}{d^s}(0)<0$ (the latter can be seen, for instance, by applying the Hopf lemma for entire antisymmetric supersolutions in \cite[Proposition 3.3]{FJ}). Consequently, we have 
$$
\Bigg(\frac{u_R}{d^s} \Bigg|_{\partial_{in} A_{R}}\Bigg)\cdot \Bigg(\frac{u_R}{d^s} \Bigg|_{\partial_{out} A_{R}}\Bigg) <0\qquad\text{for}\quad R>0\quad  \text{sufficiently large}.
$$
Here, $d(x)=\min\big(|x|-R, R+1-|x|\big)$ and $\partial_{in} A_{R}$, $\partial_{out} A_{R}$ denote respectively the inner and the outer boundary of the annulus $A_R$, i.e. $\partial_{in} A_{R}=\partial B_R(0)$ and $\partial_{out} A_{R}=\partial B_{R+1}(0)$. Without loss of generality we may assume 
\be\label{Hopf type lemma for the second radial eigenfunction}
\frac{u_R}{d^s}\Big|_{\partial_{out} A_{R}}>0\qquad\text{and}\qquad \frac{u_R}{d^s}\Big|_{\partial_{in} A_{R}}<0.
\ee
For simplicity we let $\ov\psi_R:=\frac{\ov{u}_R(\cdot+R)}{\min(\cdot,1-\cdot)^s}:[0,1]\to\R$ so that $\ov\psi_R(1)>0$ and $\ov\psi_R(0)<0$. In the spirit of \cite{Fetal}, for any fix direction $j\in\{1,\cdots,N\}$, we define 

\be\label{def-dj}
d^j_R=(v^j_R)^+1_{H_+^j}-(v^j_R)^-1_{H_-^j},
\ee
where $$v^j_R:\R^N\to\R^N,\;\;\;x\mapsto v^j_R(x)=\left\{\begin{aligned} &\frac{\partial u_R}{\partial x_j}(x), && \text{if}\;\; x\in A_R;\\
&0&&\text{if}\;\;x\in\R^N\setminus A_R.\end{aligned}\right
.$$ 
Now the point is because of \eqref{Hopf type lemma for the second radial eigenfunction} we find that $d^j_R\in \calH^s_0(A_R)$. Indeed, By \cite{MS} we know that
\be\label{eq-fract-normal-deriv}
d^{1-s}(x)\n u_R\cdot\nu(x)=-s\psi_R(x)\qquad\forall\,\,x\in\partial A_{R},\quad\text{with}\quad \psi_R(x):=\lim_{A_{R}\ni y\to x}\frac{u_R(y)}{d^s(y)}.
\ee
Applying this to the inner and outer boundary of $A_R$ gives respectively
\be\label{eq-boundary-reg}
s\ov\psi_R(0)=\lim_{t\to 0^+}t^{1-s}\partial_r\ov w_R(t)\qquad\text{and}\qquad -s\ov\psi_R(1)=\lim_{t\to 1^-}(1-t)^{1-s}\partial_r\ov w_R(t),
\ee
from which we deduce that $v^j_R(x)=\partial_r\ov u_R(|x|)\frac{x_j}{|x|}<0$ near $\partial_{out}A_R$ whenever $x_j>0$ since $\ov\psi_R(1)>0$ by \eqref{Hopf type lemma for the second radial eigenfunction}, that is, $(v^j_R)^+1_{H^j_+}=0$ near $\partial_{out}A_R$. Hence, $d^j_R=0$ near $\partial_{out}A_R\cap H^j_+$. By the same reasoning we show that $d^j_R=0$ near $\partial_{out}A_R\cap H^j_-$. Consequently $d^j_R=0$ near $\partial_{out}A_R$. Similarly, since $\ov\psi_R(0)<0$ by \eqref{Hopf type lemma for the second radial eigenfunction}, using the first identity in \eqref{eq-boundary-reg} we show that $d^j_R=0$ near $\partial_{in}A_R$. In conclusion we have $\supp(d^j_R)\subset\subset A_R$. By \cite[Lemma $2.2$]{Fetal}, to conclude that $d^j_R\in \cH^s_0(A_R)$, one simply needs to check that $d^j_R\in \calH^s_{loc}(A_R)$. For this we argue as in \cite[Lemma $3.5$]{Fetal}. In fact, since for all $\O'\subset\subset A_R$, we have that $\O'\cup\s_j(\O')\subset\subset A_R$ and $\big[(v^j_R)^+1_{H^j_+}\big]^2_{H^s(\O')}\leq \big[(v^j_R)^+1_{H^j_+}\big]^2_{H^s\big(\O'\cup \s_j(\O')\big)}$, we may assume without loss that $\O'$ is symmetric with respect to the reflection $\s_j$ across the hyperplane $\partial H^j_+$ and then argue as in \cite[Lemma $3.5$]{Fetal}. Having $d^j_R\in\calH^s_0(A_R)$ for all $j\in\{1,\cdots,N\}$, we can argue as in \cite{Fetal} to show that $u_R$ has Morse index greater than or equal $N+1$ and this contradicts the fact that $u_R$ is a second eigenfunction.
\end{proof}

\section{Maximization of the second eigenvalue}\label{sec-6}
We first start with some preliminaries. In the following, let $\tau\in(0,1)$ be fixed. For simplicity, we let $\O_a:=B_1(0)\setminus \ov{B_\t(ae_1)}$ for all $a\in(-1+\t,1-\t)$ and define

\be\label{first-anti-eigenvalue}
\l_{1,s}^-(a):=\l_{1,s}^-(\Omega_a)=\inf\left\lbrace  \frac{\frac{b_{N,s}}{2}\int_{\R^{2N}}\frac{(u(x)-u(y))^2}{|x-y|^{N+2s}}dxdy}{\int_{\O_a}u^2(x)dx}:u\in\calH^s_0(\O_a), u\circ \s_N=-u\right\rbrace ,
\ee
where $\s_N$ is the reflection with respect to the hyperplane $\partial H^N_+:=\{x\in\R^N:x_N=0\}$. It is a standard fact that the infimum in \eqref{first-anti-eigenvalue} is achieved by some $u$ which solves the equation $\Ds u=\l_{1,s}^-(a)u$ in $\O_a$ in the weak sense. We recall the following properties of minimizers of \eqref{first-anti-eigenvalue}.

\begin{lemma}[Theorem 1.2, \cite{SS}]\label{properties-of-first-anti-eigenfunction}The function $u$ achieving the infimum in \eqref{first-anti-eigenvalue} is unique (up to a multiplicative constant) and it is of one sign in $\O^+_a:=\O_a\cap \{x\in\R^N:x_N>0\}$. Without loss we may assume $u>0$ in $\O^+_a$.
\end{lemma}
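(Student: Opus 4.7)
The plan is a Krein--Rutman reduction to the half-domain $\O_a^+ := \O_a \cap \{x_N > 0\}$, combined with the strong maximum principle for antisymmetric fractional problems of \cite{SS}. Any antisymmetric $u\in\calH^s_0(\O_a)$ is determined by its restriction $v := u|_{\O_a^+}$, and, conversely, any admissible $v$ on $\O_a^+$ extends to such a $u$ by antisymmetry. Splitting the double integral defining $\calE_s(u,u)$ according to the signs of $x_N,y_N$ and using $u\circ\s_N=-u$, a direct computation gives
\[
\calE_s(u,u) = b_{N,s}\int_{\O_a^+\times\O_a^+}(v(x)-v(y))^2\,J_+(x,y)\,dx\,dy + 2b_{N,s}\int_{\O_a^+}\k_a(x)\,v(x)^2\,dx,
\]
together with $\|u\|_{L^2(\O_a)}^2 = 2\|v\|_{L^2(\O_a^+)}^2$. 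Here
\[
J_+(x,y) := \frac{1}{|x-y|^{N+2s}} - \frac{1}{|x-\s_N(y)|^{N+2s}} > 0 \quad\text{for } x,y\in H^N_+,
\]
and $\k_a\ge 0$ absorbs the contribution coming from points outside $\O_a^+$. Consequently $\l_{1,s}^-(a)$ coincides with the principal eigenvalue on $L^2(\O_a^+)$ of the symmetric coercive form built from $J_+$ and $\k_a$, whose associated operator I denote by $\calL_a$.

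The form domain of $\calL_a$ embeds compactly into $L^2(\O_a^+)$ by the standard fractional Rellich theorem, so the resolvent $\calL_a^{-1}$ is a compact selfadjoint operator on $L^2(\O_a^+)$. The strong maximum principle of \cite[Theorem 1.2]{SS}, applied in the present antisymmetric setup, yields that $\calL_a^{-1}$ is strongly positive in the sense that it maps any nonzero nonnegative datum to a function strictly positive throughout $\O_a^+$. Applying the Krein--Rutman theorem to this compact strongly positive operator then forces its largest eigenvalue, which corresponds to $\l_{1,s}^-(a)$, to be simple with an eigenfunction $v$ that can be chosen strictly positive in $\O_a^+$. Extending $v$ by antisymmetry to all of $\O_a$ produces a minimizer $u$ of \eqref{first-anti-eigenvalue} that is unique up to a multiplicative constant and of one sign in $\O_a^+$, which is the claim.

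The main obstacle is precisely the strong positivity of $\calL_a^{-1}$: while the pointwise inequality $J_+>0$ gives positivity of eigenfunctions only in a weak sense, promoting this to strict interior positivity on $\O_a^+$ is nontrivial because $\calL_a$ is nonlocal and the classical Hopf lemma does not apply to it directly. This step is exactly the content of \cite[Theorem 1.2]{SS}, so the argument above amounts to performing the bookkeeping that reduces \eqref{first-anti-eigenvalue} to the principal eigenvalue problem for $\calL_a$ and then invoking that theorem; one could also simply quote \cite{SS} and conclude immediately.
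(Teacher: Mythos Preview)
The paper does not prove this lemma at all: it is stated as a direct quotation of \cite[Theorem~1.2]{SS}, with no argument given. So there is no ``paper's own proof'' to compare against; your proposal supplies an outline that the paper deliberately omits.

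Your Krein--Rutman reduction is a reasonable sketch of how such results are established, and the decomposition of $\calE_s(u,u)$ via the antisymmetrized kernel $J_+$ is standard and correct. However, there is a circularity in your write-up: you invoke ``the strong maximum principle of \cite[Theorem~1.2]{SS}'' to obtain strong positivity of $\calL_a^{-1}$, but \cite[Theorem~1.2]{SS} \emph{is} the statement you are proving. The maximum-principle ingredients you actually need from \cite{SS} are the weak and strong maximum principles for antisymmetric supersolutions, which in that paper are Propositions~2.3 and~2.4 (the present paper cites them as such later in Section~\ref{sec-6}). If you replace the reference accordingly, the Krein--Rutman argument goes through; as you yourself note at the end, one may equally well just cite \cite{SS} and stop, which is precisely what the paper does.
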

Using the variational characterization 
$$
\l_{2,s}(a):=\l_{2,s}(\O_a)=\inf\left\lbrace  \frac{\frac{b_{N,s}}{2}\int_{\R^{2N}}\frac{(u(x)-u(y))^2}{|x-y|^{N+2s}}dxdy}{\int_{\O_a}u^2(x)dx}:u\in\calH^s_0(\O_a), \int_{\O_a}u(x)\phi_1(x)\,dx=0\right\rbrace,
$$
and recalling that the first eigenfunction $\phi_1$ of $\O_a$ is symmetric with respect to $\s_N$, one easily checks that
\be\label{upper-bound-lambda-2-s}
\l_{2,s}(a)\leq \l_{1,s}^-(a)\qquad\forall\,a\in(-1+\t,1-\t)
\ee
The following observation is of key importance for the proof of Theorem \ref{thm:max-prob}.

\begin{lemma}\label{key-identity}
Assume a second eigenfunction of the annulus $\O_0:=B_1(0)\setminus\ov{B_\t(0)}$ cannot be radial. Then we have 
\be\label{eq-6.3}
\l_{2,s}(0)=\l_{1,s}^-(0).
\ee
\end{lemma}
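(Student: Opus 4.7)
The bound $\l_{2,s}(0)\le\l_{1,s}^-(0)$ is already provided by \eqref{upper-bound-lambda-2-s}, so the task reduces to proving the reverse inequality. The plan is to exploit the full $O(N)$-invariance of the annulus $\O_0$ together with the nonradiality hypothesis in order to construct a nontrivial second eigenfunction of $\O_0$ that is antisymmetric with respect to the hyperplane $\{x_N=0\}$. Such a function is admissible in the variational characterization of $\l_{1,s}^-(0)$ in \eqref{first-anti-eigenvalue}, and its Rayleigh quotient equals $\l_{2,s}(0)$, which immediately yields $\l_{1,s}^-(0)\le\l_{2,s}(0)$.

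To implement this, let $\phi_2$ be any second eigenfunction of $\O_0$. Since $\O_0$ is $O(N)$-invariant and $(-\Delta)^s$ commutes with every $R\in O(N)$, the function $\phi_2\circ R$ belongs to $\calH^s_0(\O_0)$ and is again an eigenfunction with eigenvalue $\l_{2,s}(0)$. The nonradiality of $\phi_2$ implies that $\phi_2$ cannot be invariant under every reflection across a hyperplane through the origin: otherwise, since such reflections generate $O(N)$, $\phi_2$ would be $O(N)$-invariant and hence radial, contradicting the assumption. Pick a reflection $\s$ with $\phi_2\circ\s\ne\phi_2$, then choose $S\in SO(N)$ conjugating $\s$ to $\s_N$, i.e.\ $\s=S^{-1}\s_N S$. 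Setting $\tilde\phi:=\phi_2\circ S^{-1}$, which is again a second eigenfunction, one checks that $\tilde\phi\circ\s_N=\phi_2\circ\s\circ S^{-1}\ne\phi_2\circ S^{-1}=\tilde\phi$, so its odd part
\[
\psi:=\tfrac{1}{2}\bigl(\tilde\phi-\tilde\phi\circ\s_N\bigr)
\]
is a nontrivial second eigenfunction satisfying $\psi\circ\s_N=-\psi$.

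Inserting $\psi$ into the Rayleigh quotient in \eqref{first-anti-eigenvalue} gives $\l_{1,s}^-(0)\le\l_{2,s}(0)$, which combined with \eqref{upper-bound-lambda-2-s} yields \eqref{eq-6.3}. I do not anticipate a serious technical obstacle: the argument rests only on the $O(N)$-invariance of $\O_0$, the intertwining of $(-\Delta)^s$ with rigid motions (ensuring that $\phi_2\circ R$ inherits both the function-space membership in $\calH^s_0(\O_0)$ and the eigenvalue equation), and the elementary fact that reflections generate $O(N)$. The single point deserving attention is the nonvanishing of $\psi$, but this is immediate from the construction: at some $x$ one has $\tilde\phi(x)\ne\tilde\phi(\s_N x)$, so $\psi(x)\ne 0$.
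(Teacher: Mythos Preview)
Your argument is correct and in fact more elementary than the one in the paper. The paper appeals to the spectral decomposition of radial domains from \cite{Dyda,Dyda-2}: it identifies the full set of eigenvalues of $\O_0$ as $\{\l_j^{(N+2p)}\}_{j\ge1,\,p\ge0}$ (where $\l_j^{(N+2p)}$ is the $j$-th \emph{radial} eigenvalue of the corresponding annulus in $\R^{N+2p}$), deduces that $\l_{2,s}(0)=\min(\l_1^{(N+2)},\l_2^{(N)})$, and then uses the nonradiality assumption to rule out $\l_2^{(N)}$. This pins down the second eigenspace explicitly as $\mathrm{span}\{x_j\phi_1^{(N+2)}(|x|):j=1,\dots,N\}$, and the function $x_N\phi_1^{(N+2)}(|x|)$ is then the antisymmetric competitor.

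Your route bypasses this machinery entirely: you only use that $(-\Delta)^s$ intertwines with $O(N)$, that reflections across hyperplanes through the origin generate $O(N)$ (so a nonradial function must fail to be fixed by at least one such reflection), and that any two such reflections are conjugate in $O(N)$. This is enough to produce a nontrivial $\s_N$-odd second eigenfunction and hence the inequality $\l_{1,s}^-(0)\le\l_{2,s}(0)$. The paper's proof yields the extra structural information about the eigenspace, but for the lemma as stated---and for its use in the proof of Theorem~\ref{thm:max-prob}---your argument suffices and avoids the external references.
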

%\sj{\bf This statement is for balls not annuli, does it extend?}
\begin{proof}
By \eqref{upper-bound-lambda-2-s}, it remains to prove $\l_{2,s}(0)\geq \l_{1,s}^-(0)$. For that we let $\l^{(N+2l)}_j$, $l\geq 0$ and $j\geq 1$ be the set of radial eigenvalues of the problem $\Ds u=\l u$ in $\O_0\subset \R^{N+2l}$ and $u=0$ in $\R^{N+2l}\setminus \O_0$. By \cite[Proposition 3(ii)]{Dyda-2}, we know that the first eigenfunction $\phi_1^{(N+2)}(|\cdot|)$ gives rise to $N$-linearly independent eigenfunctions $x_1\phi_1^{(N+2)}(|x|),\cdots, x_N\phi_1^{(N+2)}(|x|)$ to the problem $\Ds u=\l u$ in $\O_0\subset \R^N$ and $u=0$ in $\R^N\setminus \O_0$ with the same eigenvalue $\l^{(N+2)}_1$. Consequently, the second eigenvalue $\l_{2,s}(0)$ of $\O_0$ is given by $\l_{2,s}(0)=\min(\l^{(N+2)}_1,\l^{(N)}_2)$. To see the latter equality note that on the one hand we clearly have $\l_{2,s}(0)\leq\min(\l^{(N+2)}_1,\l^{(N)}_2)$. But on the other hand, the set $\{\lambda_j^{(N+2p)}\}_{j\in \N, p\in\N_0}$ contains all the eigenvalues in $\O_0\subset \R^N$ by \cite[Proposition 1]{Dyda} in combination with \cite[Proposition 3]{Dyda-2}. Since moreover $\lambda_j^{(N+2p)}$ is nondecreasing in $p$ and $j$, it follows that $\l_{2,s}(0)\geq\min(\l^{(N+2)}_1,\l^{(N)}_2)$ and thus equality must hold. Now, since by assumption a second eigenfunction cannot be radial, we deduce that $\l_{2,s}(0)=\l^{(N+2)}_1$. Therefore the eigenspace corresponding to $\l_{2,s}(0)$ is spanned by the functions $x\mapsto x_j\phi_1^{(N+2)}(|x|)$ with $j=1,\cdots,N$. Using $x_N\phi^{(N+2)}_1(|\cdot|)$ as a test function in \eqref{first-anti-eigenvalue} we get $\l_{2,s}(0)\geq \l_{1,s}^-(0)$ as wanted.
\end{proof}

\begin{proposition}\label{prop-variation-lambda-1-s-minus}
The mapping $(-1+\t,1-\t)\to\R_+,\; a\mapsto \l_{1,s}^-(a)$ is differentiable. Moreover, $(\l_{1,s}^-)'(0)=0$ and $(\l_{1,s}^-)'(a)<0$ for all $a\in(0,1-\t)$.
\end{proposition}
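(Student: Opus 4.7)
My plan is to fix $a_0\in(-1+\t,1-\t)$ and transport the eigenvalue problem on $\O_a$ to the reference domain $\O_{a_0}$ by a smooth family of diffeomorphisms. I choose a cut-off $\eta_{a_0}\in C^\infty_c(\R^N)$ that equals $1$ on a neighborhood of $\ov{B_\t(a_0e_1)}$, is supported in a compact subset of $B_1(0)$, and is invariant under $\s_N$ (e.g.\ depending only on $x_1$ and $\sqrt{x_2^2+\cdots+x_N^2}$). Then $T_\e(x):=x+\e\eta_{a_0}(x)e_1$ gives an analytic family of diffeomorphisms sending $\O_{a_0}$ onto $\O_{a_0+\e}$ for small $\e$, commuting with $\s_N$. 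Transplanting via $v_\e:=u_{a_0+\e}\circ T_\e$ recasts the eigenvalue problem as an analytic perturbation on the fixed antisymmetric subspace $\{v\in\calH^s_0(\O_{a_0}):v\circ\s_N=-v\}$. Since $\l_{1,s}^-(a_0)$ is simple there by Lemma \ref{properties-of-first-anti-eigenfunction}, standard perturbation theory for simple eigenvalues (Kato) yields real-analyticity of $a\mapsto\l_{1,s}^-(a)$ at $a_0$, hence differentiability on the whole interval.

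\textbf{Hadamard-type formula and the case $a=0$.} Let $u_a$ denote the $L^2$-normalized antisymmetric eigenfunction, chosen positive on $\O_a\cap\{x_N>0\}$. I next apply the fractional shape derivative formula (see e.g.\ \cite{F2017}):
\begin{equation}\label{prop-variation-lambda-1-s-minus-pf-1}
(\l_{1,s}^-)'(a)=-\G(1+s)^2\int_{\pa\O_a}\Big(\frac{u_a}{d^s}\Big)^{\!2}\!(x)\,V(x)\cdot\nu(x)\,d\s(x),
\end{equation}
with $V$ the velocity field of the perturbation and $\nu$ the outer unit normal of $\O_a$. Since $V\equiv 0$ on $\pa B_1(0)$, $V\equiv e_1$ on $\pa B_\t(ae_1)$, and $\nu(x)=-(x-ae_1)/\t$ on the inner sphere, \eqref{prop-variation-lambda-1-s-minus-pf-1} reduces to
\begin{equation}\label{prop-variation-lambda-1-s-minus-pf-2}
(\l_{1,s}^-)'(a)=\frac{\G(1+s)^2}{\t}\int_{\pa B_\t(ae_1)}\Big(\frac{u_a}{d^s}\Big)^{\!2}\!(x)\,(x_1-a)\,d\s(x).
\end{equation}
At $a=0$, $\O_0$ is invariant under the reflection $\s_1$ in $\{x_1=0\}$. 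Then $u_0\circ\s_1$ is antisymmetric in $x_N$, solves the same eigenvalue problem, and stays positive on $\{x_N>0\}$; by the uniqueness from Lemma \ref{properties-of-first-anti-eigenfunction} we must have $u_0\circ\s_1=u_0$. Hence $(u_0/d^s)^2$ is even in $x_1$, the integrand in \eqref{prop-variation-lambda-1-s-minus-pf-2} is odd in $x_1$, and $(\l_{1,s}^-)'(0)=0$.

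\textbf{Strict sign for $a>0$ via a doubly antisymmetric Hopf lemma.} For $a\in(0,1-\t)$ let $\rho_a$ be the reflection across $\{x_1=a\}$. A direct computation using $a>0$ yields $\rho_a(\O_a\cap\{x_1>a\})\subsetneq\O_a\cap\{x_1<a\}$, the inclusion being strict because $B_1(2ae_1)$ protrudes to the right of $B_1(0)$. Introduce
$$w(x):=u_a(\rho_a x)-u_a(x),\qquad x\in\R^N,$$
which is antisymmetric under both $\rho_a$ and $\s_N$. For $x\in\O_a^+:=\O_a\cap\{x_1>a\}$, both $x$ and $\rho_a x$ lie in $\O_a$, so subtracting the eigenvalue equations gives $\Ds w=\l_{1,s}^-(a)\,w$ in $\O_a^+$. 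Moreover $w\geq 0$ on $\{x_1>a,x_N>0\}\setminus\O_a^+$ with strict positivity on the nontrivial set $\{y:y_1>a,\,y_N>0,\,y\in B_1(2ae_1)\setminus\ov{B_1(0)},\,|y-ae_1|>\t\}$, coming from the strict inclusion above. Applying the maximum principle and Hopf-type lemma for doubly antisymmetric functions from \cite{SS} on the quadrant $\O_a^+\cap\{x_N>0\}$, I conclude $w>0$ there and, using $d(\rho_a y)=d(y)$ near the inner sphere,
$$
\frac{u_a}{d^s}(\rho_a x)-\frac{u_a}{d^s}(x)=\lim_{\O_a^+\ni y\to x}\frac{w(y)}{d(y)^s}>0\quad\text{for all }x\in\pa B_\t(ae_1)\cap\{x_1>a,\,x_N>0\}.
$$
Pairing $x$ with $\rho_a x$ in \eqref{prop-variation-lambda-1-s-minus-pf-2} and invoking the $x_N$-symmetry of $(u_a/d^s)^2$ then gives
$$
\int_{\pa B_\t(ae_1)}\Big(\frac{u_a}{d^s}\Big)^{\!2}(x_1-a)\,d\s=\int_{\pa B_\t(ae_1)\cap\{x_1>a\}}\Big[\Big(\frac{u_a}{d^s}\Big)^{\!2}\!(x)-\Big(\frac{u_a}{d^s}\Big)^{\!2}\!(\rho_a x)\Big](x_1-a)\,d\s<0,
$$
so $(\l_{1,s}^-)'(a)<0$. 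The principal technical hurdle will be to verify carefully the hypotheses of the doubly antisymmetric Hopf lemma of \cite{SS}: one must check that the mixed outer data of $w$ (nonnegative on $\{x_1>a,x_N>0\}\setminus\O_a^+$, strictly positive on a nontrivial portion, doubly antisymmetric elsewhere) really satisfy the precise framework allowing both the strong interior positivity and the strict boundary bound $w/d^s>0$; the differentiability and the symmetrization at $a=0$ are comparatively routine applications of existing theory.
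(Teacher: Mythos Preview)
Your proposal is correct and follows essentially the same route as the paper: both transport via a cutoff-supported translation diffeomorphism, use the simplicity from Lemma~\ref{properties-of-first-anti-eigenfunction} to get differentiability, derive the Hadamard formula on the inner sphere, exploit $\sigma_1$-invariance at $a=0$, and for $a>0$ define $w=u_a\circ\rho_a-u_a$ and apply the doubly antisymmetric maximum principle and Hopf lemma from \cite{SS}. The only notable differences are that the paper establishes differentiability by an explicit $\limsup/\liminf$ argument (rather than invoking Kato) and derives the Hadamard identity from \cite{ST} rather than citing it; also, your reference \cite{F2017} for the shape derivative is misplaced---the relevant result is in \cite{Setal,ST}.
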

\begin{proof}Fix $a\in(-1+\t,1-\t)$ and let $\rho\in C^\infty_c(B_1)$ so that $\rho\equiv 1$ near $B_\t(ae_1)$ and $\rho\circ \s_N=\rho$. For any $\eps\in(-\eps_0,\eps_0)$, we consider the map $\Phi_\eps:\R^N\to\R^N, x\mapsto \Phi_\eps(x)=x+\eps\rho(x)e_1$. By making $\eps_0$ sufficiently small if necessary, one may assume that $\Phi_\eps:\R^N\to\R^N$ is a global diffeomorphism for all $\eps\in(-\eps_0,\eps_0)$. Moreover, it is easily verifiable that $\Phi_\eps(\O_a)=\O_{a+\eps}$ and hence $\l_{1,s}^-(a+\eps)=\l_{1,s}^-\big(\Phi_\eps(\O_a)\big)$. Now it suffices to prove that the function $\eps\mapsto\l_{1,s}^-\big(\Phi_\eps(\O_a)\big)$ is differentiable at $0$ and this is a consequence of the simplicity of $\l^-_{1,s}(a)$ stated in Lemma \ref{properties-of-first-anti-eigenfunction}. Indeed, let $u$ be the unique normalized minimizer corresponding to $\l_{1,s}^-(a)$, then the function $$w_\eps:=\frac{u\circ \Phi_\eps^{-1}-[u\circ \Phi_\eps^{-1}]\circ \s_N}{2}$$ is admissible in the variational characterization of $\l_{1,s}^-(a+\eps)$ and therefore 
$$
\l_{1,s}^-(a+\eps)\leq \frac{\frac{c_{N,s}}{2}\int_{\R^{2N}}\frac{(w_\eps(x)-w_\eps(y))^2}{|x-y|^{N+2s}}dxdy}{\int_{\O_{a+\eps}}w_\eps^2(x)dx}=:m(\eps).
$$
Since $m(0)=\l_{1,s}^-(a)$, we deduce that 
\be\label{eq-estimate-directional-deriv-of-lambda-1-s-minus}
\limsup_{\eps\to 0^+}\frac{\l_{1,s}^-(a+\eps)-\l_{1,s}^-(a)}{\eps}\leq \frac{d}{d\eps}_{\Big|\eps=0}m(\eps).
\ee
By a simple change of variables and using that $\phi_\eps\circ \s_N=\s_N\circ\phi_\eps$, one obtains
\be\label{eq-deriv-m-eps}
\frac{d}{d\eps}_{\Big|\eps=0}m(\eps)=\int_{\R^{2N}}(u(x)-u(y))^2K_X(x,y)dxdy+\l_{1,s}^-(a)\int_{\O_a}u^2(x)\div X(x)dx,
\ee
where 
\begin{align}
K_X(x,y)=\frac{b_{N,s}}{2}\left\lbrace \bigl(\div\,X(x) + \div\,X(y)\bigr)- (N+2s)\frac{ \bigl(X(x)-X(y)\bigr)\cdot (x-y)}{|x-y|^2}\right\rbrace \frac{1}{|x-y|^{N+2s}} 
\end{align}
and $X=\rho e_1\in C^{\infty}_c(B_1,\R^N)$. Since $u$ solves weakly the equation 
$$
\left\{\begin{aligned} &\Ds u=\l_{1,s}^-(a)u &&\text{in}\quad\O_a;\\
&u\in\cH^s_0(\O_a),&&\end{aligned}\right.
$$
the standard regularity theory see e.g \cite{RX} gives $u\in L^\infty(\O_a)\cap C^1_{loc}(\O_a)$ and therefore $u$ satisfies the assumptions of \cite[Theorem $1.2$]{ST}. Consequently 
\begin{align*}
&\int_{\R^{2N}}(u(x)-u(y))^2K_X(x,y)dxdy\\
&=\G^2(1+s)\int_{\partial\O_a}(u/d^s)^2X\cdot\nu\,dx+2\int_{\O_a}\n u\cdot X\Ds u\,dx\\
&=\G^2(1+s)\int_{\partial\O_a}(u/d^s)^2X\cdot\nu\,dx+2\l_{1,s}^-\int_{\O_a}u\n u\cdot X\,dx\\
&=\G^2(1+s)\int_{\partial\O_a}(u/d^s)^2 X\cdot\nu\,dx-\l_{1,s}^-(a)\int_{\O_a}u^2(x)\div X(x)\,dx
\end{align*}
Here, $\nu$ is the outer unit normal to the boundary. Plugging this into \eqref{eq-deriv-m-eps} and recalling \eqref{eq-estimate-directional-deriv-of-lambda-1-s-minus}, we conclude that
\begin{align}\label{eq-limsup-estimate}
\limsup_{\eps\to 0^+}\frac{\l_{1,s}^-(a+\eps)-\l_{1,s}^-(a)}{\eps}&\leq \G^2(1+s)\int_{\partial\O_a}(u/d^s)^2X\cdot\nu\,dx\nonumber\\
&=\G^2(1+s)\int_{\partial B_\t(ae_1)}(u/d^s)^2 e_1\cdot\nu\,dx
\end{align}
Next, let $\eps_n$ be a subsequence such that 
$$
\liminf_{\eps\to 0^+}\frac{\l_{1,s}^-(a+\eps)-\l_{1,s}^-(a)}{\eps}=\lim_{n\to\infty}\frac{\l_{1,s}^-(a+\eps_n)-\l_{1,s}^-(a)}{\eps_n}
$$
Let $u_{\eps_n}$ be the (normalized) minimizer of $\lambda_{1,s}^-(a+\eps_n)$, then $v_n:=u_{\eps_n}\circ \Phi_{\eps_n}\in\calH^s_0(\O_a)$ satisfies $v_n\circ \sigma_N=-v_n$. Moreover, we have for $n$ sufficiently large
\begin{align*}
C(N,s)\geq \frac{2}{b_{N,s}}\lambda_{1,s}^-(a+\eps_n)&= \int_{\R^{2N}}\frac{(u_{\eps_n}(x)-u_{\eps_n}(y))^2}{|x-y|^{N+2s}}dxdy\\
&=\int_{\R^{2N}}\frac{(v_{n}(x)-v_{n}(y))^2}{|\Phi_{\eps_n}(x)-\Phi_{\eps_n}(y)|^{N+2s}}\textrm{Jac}_{\Phi_{\eps_n}}(x)\textrm{Jac}_{\Phi_{\eps_n}}(y) dxdy\\
& \geq \ov C(N,s) \int_{\R^{2N}}\frac{(v_{n}(x)-v_{n}(y))^2}{|x-y|^{N+2s}}dxdy,
\end{align*}
where in the last estimate we have used \cite[estimate $(3.6)$]{Setal}. Hence, we may assume after passing to a subsequence still denoted by $n$, that
$$
v_n\to v_0\quad \text{weakly in $\calH^s_0(\O_a)$}, \quad v_n\to v_0\quad \text{in $L^2(\O_a)$},\quad \text{and}\quad v_n\to v_0\quad \text{ a.e in $\O_a$}
$$
By the pointwise converge, we check that $v_0\circ \sigma_N=-v_0$. Using $v_0$ as a test function in the variational characterization of $\l_{1,s}^-(a)$ and arguing as in \cite[Lemma $4.5$]{Setal}, we obtain
\be\label{eq-liminf-estimate}
\liminf_{\eps\to 0^+}\frac{\l_{1,s}^-(a+\eps)-\l_{1,s}^-(a)}{\eps}=\lim_{n\to\infty}\frac{\l_{1,s}^-(a+\eps_n)-\l_{1,s}^-(a)}{\eps_n}
\geq \G^2(1+s)\int_{\partial B_\t(ae_1)}(u/d^s)^2 e_1\cdot\nu\,dx
\ee
Combining \eqref{eq-limsup-estimate} and \eqref{eq-liminf-estimate} yields
\be\label{eq-right-limit}
\partial^+_\eps\l_{1,s}^-(a+\eps):=\lim_{\eps\to 0^+}\frac{\l_{1,s}^-(a+\eps)-\l_{1,s}^-(a)}{\eps}= \G^2(1+s)\int_{\partial B_\t(ae_1)}(u/d^s)^2 e_1\cdot\nu\,dx. 
\ee
Applying \eqref{eq-right-limit} to $\eps\mapsto\l_{1,s}^-(a-\eps)$ gives
\begin{align}
\lim_{\eps\to 0^-}\frac{\l_{1,s}^-(a+\eps)-\l_{1,s}^-(a)}{\eps}=-\partial^+_\eps\l_{1,s}^-\big(a-\eps\big)=\G^2(1+s)\int_{\partial B_\t(ae_1)}(u/d^s)^2 e_1\cdot\nu\,dx.
\end{align}
Consequently 
\be\label{shape deriv-of-the-first-antisymmetric-eigenvalue}
(\l_{1,s}^-)'(a)=\lim_{\eps\to 0}\frac{\l_{1,s}^-(a+\eps)-\l_{1,s}^-(a)}{\eps}=\G^2(1+s)\int_{\partial B_\t(ae_1)}(u/d^s)^2 e_1\cdot\nu\,dx,
\ee
for all $a\in(-1+\t,1-\t)$. Since $\l_{1,s}^-(a)=\l_{1,s}^-(-a)$, we have $(\l_{1,s}^-)'(0)=0$. Next, fix $a\in(0,1-\t)$ and let $H_a:=\{x\in\R^N:x_1>a\}$. Moreover, let us denote by $\s_a$ the reflection with respect to the hyperplane $\partial H_a$. For simplicity, we let $\ov u:=u\circ \s_a$, $w:=\ov u-u$ and $B^a:=B_\t(ae_1)$. It is not difficult to check that $w$ solves
\be\label{eq:weak-antisymmetric-eq-w}
\Ds w = \l^-_{1,s}(a)w\qquad\text{in}\qquad \Theta:=H_a\cap\O^+_a.
\ee
Moreover, by Lemma \ref{properties-of-first-anti-eigenfunction} we have 
\be\label{boundary-condiyion-w}
w\geq 0\quad\text{in}\quad H_a\cap H_+^N\setminus \Theta,
\ee
By \cite[Proposition $2.3$]{SS}, the maximum principle for doubly antisymmetric functions, we get that $w\geq 0$. Consequently $w>0$ in $\Theta$ by the strong maximum principle \cite[Proposition $2.4$]{SS} since $w>0$ in $\s_a(B_1)\cap H_a\cap H^N_+\setminus\Theta$.  Finally, by \cite[Proposition $2.4$]{SS}, we deduce that
\be\label{boundary-fractional-normal-deriv-of-w}
0<\frac{w}{d^s}=\frac{\ov u}{d^s}-\frac{u}{d^s}\qquad\text{on}\qquad \partial B^a\cap H^N_+\cap H_a=\partial B^a\cap H_a\cap\O^+_a.
\ee
It is also clear that 
\be\label{boundary-fractional-normal-deriv-of-u}
\frac{u}{d^s}\geq 0\qquad\text{on}\qquad \partial B^a\cap H_a\cap\O^+_a.
\ee
Now using the fact that $\s_N\Big(\partial B^a\cap H_a\cap\O^+_a\Big)=\partial B^a\cap H_a\cap\O^-_a$, $\s_N(\nu\cdot e_1) = \nu\cdot e_1$ and $\s_N\circ \s_a = \s_a\circ \s_N$, we get
\begin{align}\label{eq:rewriting-shape-deriv-of-lambda-minus}
&\int_{\partial B^a}\big(\frac{u}{d^s}\big)^2\nu\cdot e_1dx= \int_{\partial B^a\cap H_a}\Big[\big(\frac{u}{d^s}\big)^2-\big(\frac{u\circ \s_a}{d^s}\big)^2\Big]\nu\cdot e_1dx\nonumber\\
&=\int_{\partial B^a\cap H_a\cap\O^+_a}\Big[\big(\frac{u}{d^s}\big)^2-\big(\frac{\ov u}{d^s}\big)^2\Big]\nu\cdot e_1dx+\int_{\partial B^a\cap H_a\cap\O^-_a}\Big[\big(\frac{u}{d^s}\big)^2-\big(\frac{\ov u}{d^s}\big)^2\Big]\nu\cdot e_1dx\nonumber\\
&=\int_{\partial B^a\cap H_a\cap\O^+_a}\Big[\big(\frac{u}{d^s}\big)^2-\big(\frac{\ov u}{d^s}\big)^2\Big]\nu\cdot e_1dx+\int_{\partial B^a\cap H_a\cap\O^+_a}\Big[\big(\frac{u\circ \s_a}{d^s}\big)^2-\big(\frac{\ov u\circ \s_a}{d^s}\big)^2\Big]\nu\cdot e_1dx\nonumber\\
&=2\int_{\partial B^a\cap H_a\cap\O^+_a}\Big[\big(\frac{u}{d^s}\big)^2-\big(\frac{\ov u}{d^s}\big)^2\Big]\nu\cdot e_1dx\nonumber\\
&=-2\int_{\partial B^a\cap H_a\cap\O^+_a}\frac{w}{d^s}\Big(\frac{u}{d^s}+\frac{\ov u}{d^s}\Big)\nu\cdot e_1dx.
\end{align}
Combining \eqref{shape deriv-of-the-first-antisymmetric-eigenvalue}, \eqref{boundary-fractional-normal-deriv-of-w}, \eqref{boundary-fractional-normal-deriv-of-u}, and \eqref{eq:rewriting-shape-deriv-of-lambda-minus} we conclude that 
$$
(\l_{1,s}^-)'(a)= \G^2(1+s)\int_{\partial B^a}\big(u/d^s\big)^2\nu\cdot e_1dx<0\qquad\forall\,a\in(0,1-\t),
$$
as wanted.
\end{proof}
We are now ready to complete the proof of Theorem \ref{thm:max-prob}.
\begin{proof}[Proof of Theorem \ref{thm:max-prob}]
Since the problem is rotational and translation invariant, we may consider without loss of generality domains of the form  
\[\O_a:=B_1\setminus\ov{B_\t(ae_1)},\qquad\text{for}\quad a\in [0,1-\t).\]
By  \eqref{upper-bound-lambda-2-s}, Lemma \ref{key-identity}, and Proposition \ref{prop-variation-lambda-1-s-minus}, it immediately follows that
\be\label{eq:4.41}
\l_{2,s}(a)\leq \l^-_{1,s}(a)<\l^-_{1,s}(0)=\l_{2,s}(0)
\ee
for all $a\in (0,1-\t)$. The proof is finished.
\end{proof}

\bibliographystyle{amsplain}

\end{document}